\documentclass[11pt,a4,reqno]{amsart}
\usepackage{amsfonts,amsmath,amssymb}
\setlength{\oddsidemargin}{0.0in}
\setlength{\evensidemargin}{0.0in}
\setlength{\textwidth}{6.5in}
\setlength{\topmargin}{0.0in}
\setlength{\textheight}{8.5in}
\usepackage{mathabx}
\usepackage{mathrsfs}
\usepackage[latin1]{inputenc}
\usepackage[usenames,dvipsnames]{pstricks}
\newtheorem{theorem}{Theorem}[section]

\newtheorem{lemma}[theorem]{Lemma}


\newcommand{\R}{\mathbb{R}}

\newcommand{\N}{\mathbb{N}}

\newcommand{\s}{\mathbb{S}}

\newcommand{\norm}[1]{\|#1\|}
\newcommand{\abs}[1]{|#1|}
\newcommand{\set}[1]{\left\{#1\right\}}
\newcommand{\para}[1]{\left(#1\right)}

\newcommand{\seq}[1]{\left<#1\right>}

\newcommand{\To}{\longrightarrow}

\usepackage{graphicx}


\newcommand{\F}{\mathcal{F}}
\newcommand{\rr}{\varrho}
\newcommand{\E}{E^t_\rr}
\newcommand{\beas}{\begin{eqnarray*}}
\newcommand{\eeas}{\end{eqnarray*}}
\newcommand{\V}{\mathcal{V}_\delta}
\newcommand{\LL}{L^2(\R^n)}
\usepackage{times}
\allowdisplaybreaks
\setlength{\parindent}{1em}
\setlength{\parskip}{0.5em}


\begin{document}

\title[Stability estimate in the inverse scattering]{Stability estimate in the inverse scattering for a single quantum particle in an external short-range potential}
\author[M. Bellassoued]{Mourad Bellassoued}
\author[L.Robbiano]{Luc Robbiano}
\address{University of Tunis El Manar, National Engineering School of Tunis, ENIT-LAMSIN, B.P. 37, 1002 Tunis, Tunisia}
\email{mourad.bellassoued@enit.utm.tn}
\address{Laboratoire de Math\'ematiques, Universit\'e de Versailles Saint-Quentin en Yvelines,
78035 Versailles, France}
\email{luc.robbiano@uvsq.fr}
\date{\today}
\subjclass[2010]{Primary 35R30, 35P25, 81U40} 
\keywords{Inverse scattering, quantum particle, external short-range potential, Stability estimate}

\begin{abstract}
In this paper we consider the inverse scattering problem for the Schrödinger operator with short-range electric potential. We prove in dimension $n\geq 2$ that the knowledge of the scattering operator determines the electric potential and we establish Hölder-type stability in determining the short range electric potential.
\end{abstract}
\maketitle
\section{Introduction and main results}
This paper concerns inverse scattering problems for a large class of Hamiltonian with short-range electric potential. A single quantum particle in an external potential is described by the Hilbert space $\LL$ and the family of Schrödinger Hamiltonian
\begin{equation}\label{1.1}
H=-\frac{1}{2}\Delta+V(x),\quad x\in\R^n.
\end{equation}
We suppose that the electric potential $V\in\mathcal{C}^1(\R^n,\R)$, with the short-range condition
$$
\abs{ V(x)}\leq C\seq{x}^{-\delta},
$$
for some $\delta>1$, where $\seq{x}=(1+\abs{x}^2)^{1/2}$. Then we define
$$
\V=\set{V\in\mathcal{C}^1(\R^n),\quad \abs{V(x)}\leq C\seq{x}^{-\delta},\quad \delta>1\,}.
$$
Let $H_0=\frac{1}{2}\Delta$ be the free Hamiltonian. We consider two strongly continuous unitary groups: $e^{-itH_0}$ generate the free dynamic of the system and $e^{-itH}$ a perturbation of this free dynamic. The state $u\in\LL$ is said asymptotically free as $t\to\pm\infty$ if there exists $\psi_\pm\in \LL$ such that
\begin{equation}\label{1.3.1}
\lim_{t\to\pm\infty}\norm{e^{-itH}u-e^{-itH_0}\psi_\pm}=0.
\end{equation}
Here $\psi_+$ is the outgoing (resp. incoming) asymptotic of the state $u$. The condition \eqref{1.3.1} is equivalent to the following two conditions
$$
\lim_{t\to\pm\infty}\norm{e^{itH_0}e^{-itH}u-\psi_\pm}=0,\quad \lim_{t\to\pm\infty}\norm{e^{itH}e^{-itH_0}\psi_\pm-u}=0.
$$
The fundamental direct problems of scattering theory are: (a) to determine the set of asymptotically free states, i.e., the set of $u\in \LL$ such that
$$
\lim_{t\to\pm\infty}e^{itH_0}e^{-itH}u=\psi_\pm
$$
exist, (b) the condition of the scattering operator which maps the incoming $\psi_-$ into the corresponding outgoing one $\psi_+$. 
\medskip

Let $V$ be a short-range electric potential, by \cite{Hormander}, Theorem 14.4.6,  the wave operators, defined by
$$
W_\pm(H,H_0) u=\lim_{t\to\pm\infty} e^{itH}e^{-itH_0}u,\quad u\in\LL
$$
exist as strong limits, are isometric operators, they intertwine the free and full Hamiltonian $H$ and $H_0$
$$
W_\pm(H,H_0) H_0=HW_\pm(H,H_0).
$$
Their range is the projection of the space $\LL$ onto continuous spectrum. Moreover the wave operators $W_\pm(H_0,H)$ also exist and adjoints to $W_\pm(H,H_0)$. The scattering operator $S_V:\psi_-\mapsto\psi_+$ is defined as
$$
S_V=W_+(H_0,H)W_-(H,H_0)=W_+(H,H_0)^*W_-(H,H_0).
$$
It is well known that $S_V$ is a unitary operator on $\LL$. We call $\mathcal{S}$ as a mapping from $\V$ into the set of bounded operators $\mathcal{L}(\LL)$, $\mathcal{S}(V)=S_V$, the scattering map.
\smallskip

For $s>0$, introducing the space $L^1_s(\R^n)$ be the weighted $L^1$ space in $\R^n$ with norm
$$
\norm{u}_{L^1_s(\R^n)}=\norm{\seq{\,\cdot\,}^s u}_{L^1(\R^n)}.
$$
\smallskip

The following is the main result of this paper.
\begin{theorem}\label{T.1}
Let $M>0$, $\delta>1$ and $s\in (0,1)$. There exist constants $C>0$ and $\nu\in (0,1)$ such that the following stability estimate holds
\begin{equation}\label{1.7}
\norm{V_1-V_2}_{H^{-1}(\R^n)}\leq C\norm{S_{V_1}-S_{V_2}}_{\mathcal{L}(\LL)}^\nu
\end{equation}
for every $V_1,\,V_2\in\V$ such that $(V_1-V_2)\in \LL\cap L^1_{s}(\R^n)$ and 
\begin{equation}
\norm{V}_{\LL}+\norm{V}_{L^1_s(\R^n)}\leq M.
\end{equation}
 In particularly the scattering map 
$$
\mathcal{S}:\V \To\mathcal{L}(\LL),\quad V\longmapsto S_V,
$$ 
is locally injective.
\end{theorem}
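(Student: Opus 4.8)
The plan is to reduce the stability estimate to a quantitative, high-velocity reconstruction of the potential from the scattering operator, in the spirit of the time-dependent (Enss--Weder) method, and then to convert the resulting control of the X-ray transform of $V_1-V_2$ into a bound on its Fourier transform via the Fourier slice relation.

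\emph{Step 1: a quantitative reconstruction formula.} For a unit vector $\omega\in\mathbb{S}^{n-1}$ and a large parameter $\lambda>0$ I would test the scattering operator on modulated wave packets $\Phi_v=e^{iv\cdot x}\varphi$ and $\Psi_v=e^{iv\cdot x}\psi$, with $v=\lambda\omega$ and $\varphi,\psi$ Schwartz functions. Using the intertwining relation $W_\pm H_0=HW_\pm$ together with a Duhamel expansion of $S_V-I$ (Dyson series), I expect to obtain
\begin{equation*}
i\lambda\,\seq{(S_V-I)\Phi_v,\Psi_v}=\int_{-\infty}^{\infty}\seq{V(\cdot+t\omega)\varphi,\psi}\,\dd t+\mathcal{R}_V(\lambda,\omega),
\end{equation*}
where the first-order (Born) term produces the X-ray transform of $V$ along $\omega$ and the remainder $\mathcal{R}_V$ collects the higher-order terms. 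The crucial point is to show $\abs{\mathcal{R}_V(\lambda,\omega)}\leq C(M)\lambda^{-\kappa}$ for some $\kappa>0$, uniformly over the a priori class; here the short-range decay $\seq{x}^{-\delta}$ with $\delta>1$ makes the time integrals converge and the Born series summable, while the bound $M$ controls the constants.

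\emph{Step 2: passing to the difference.} Writing this for $V_1$ and $V_2$ and subtracting, the Born terms combine into the X-ray transform of $W:=V_1-V_2$, while the two remainders are each bounded by $C(M)\lambda^{-\kappa}$. Since $\abs{\seq{(S_{V_1}-S_{V_2})\Phi_v,\Psi_v}}\leq\norm{S_{V_1}-S_{V_2}}_{\mathcal{L}(\LL)}\norm{\varphi}_{\LL}\norm{\psi}_{\LL}$ and modulation is unitary, this yields, for every large $\lambda$,
\begin{equation*}
\abs{\int_{-\infty}^{\infty}\seq{W(\cdot+t\omega)\varphi,\psi}\,\dd t}\leq \lambda\,\norm{S_{V_1}-S_{V_2}}_{\mathcal{L}(\LL)}\norm{\varphi}_{\LL}\norm{\psi}_{\LL}+C(M)\lambda^{-\kappa}.
\end{equation*}
Choosing $\varphi,\psi$ so that $\overline{\psi}\varphi$ concentrates in the hyperplane $\omega^\perp$ and carries the oscillation $e^{-i\xi\cdot y}$ with $\xi\perp\omega$, the tested X-ray transform reproduces, up to a controlled error, the value $\widehat W(\xi)$; here the weighted bound $\norm{W}_{L^1_s(\R^n)}\leq M$ furnishes the uniform modulus of continuity of $\widehat W$ needed to pass from the smoothed quantity to a genuine pointwise bound. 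Letting $\omega$ range over $\mathbb{S}^{n-1}$ (possible precisely because $n\geq 2$) reaches every frequency, giving
\begin{equation*}
\abs{\widehat W(\xi)}\leq C\big(\lambda\,\norm{S_{V_1}-S_{V_2}}_{\mathcal{L}(\LL)}+\lambda^{-\kappa}\big),\qquad \abs{\xi}\leq c\lambda.
\end{equation*}

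\emph{Step 3: frequency splitting and optimization.} I would split the $H^{-1}$ norm at frequency $R$, controlling the high-frequency part by $\int_{\abs{\xi}>R}\seq{\xi}^{-2}\abs{\widehat W(\xi)}^2\,\dd\xi\leq R^{-2}\norm{W}_{\LL}^2\leq CR^{-2}M^2$, and on $\abs{\xi}\leq R$ inserting the pointwise bound above with $\lambda$ comparable to $R$. Optimizing $\lambda\sim R$ against $\varepsilon:=\norm{S_{V_1}-S_{V_2}}_{\mathcal{L}(\LL)}$ balances the three contributions and produces $\norm{W}_{H^{-1}(\R^n)}\leq C\varepsilon^{\nu}$ for a suitable $\nu\in(0,1)$; local injectivity of $\mathcal{S}$ is the case $\varepsilon\to0$. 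The main obstacle is Step 1: the uniform, quantitative control of the remainder $\mathcal{R}_V(\lambda,\omega)$. This rests on propagation and dispersive estimates for $e^{-itH}$ and $e^{-itH_0}$ acting on high-velocity packets, on summing the Born series with explicit decay in $\lambda$ and constants depending only on $M$ and $\delta$, and on tracking how these bounds deteriorate as $\abs{\xi}$ grows --- precisely the deterioration that forces the weaker $H^{-1}$ norm on the left-hand side and dictates the joint choice of $R$ and $\lambda$ in the final optimization.
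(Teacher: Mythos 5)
Your proposal follows essentially the same route as the paper: the Enss--Weder high-velocity method with modulated wave packets, extraction of the X-ray transform of $V_1-V_2$ as the Born term with an $O(\lambda^{-\kappa})$ remainder, recovery of the Fourier transform via the slice relation and the $L^1_s$-modulus of continuity, and a final frequency splitting of the $H^{-1}$ norm with parameter optimization. The only cosmetic difference is that the paper never sums an infinite Dyson/Born series; it uses the exact Duhamel identity $i(S_V-I)u=\int e^{itH_0}VW_-e^{-itH_0}u\,dt$ and bounds $(W_--I)$ on moving packets by $C|\rho|^{-1}$, i.e.\ a two-step finite expansion, which is slightly simpler than what you envisage.
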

\medskip

We describe now some previous results related with our  problem. Let $\s^{n-1}$ be the unit sphere in $\R^n$. Define the unitary operator 
$$
\mathscr{F}:\LL\To L^2(\R^+,L^2(\s^{n-1})),\quad \mathscr{F}(u)(\omega,\lambda)=2^{-1/2}\lambda^{n-2/4}\hat{u}(\sqrt{\lambda}\omega),
$$
where $L^2(\R^+,L^2(\s^{n-1}))$ denote the $L^2$-space of functions defined on $\R^+$ with value in $L^2(\s^{n-1})$. The spectral parameter $\lambda$ plays the role of the energy of a quantum particle. Then
$$
\mathscr{F}(S_Vu)(\lambda)=S_V(\lambda)\mathscr{F}(u)(\lambda).
$$ 
The unitary operator $S_V(\lambda): L^2(\s^{n-1})\to L^2(\s^{n-1})$ is called the scattering matrix at fixed energy $\lambda$ with respect the electric potential $V$.
\medskip

The problem of identifying coefficients appearing in Schrödinger equation was treated
very well and there are many works that are relevant to this topic. In the case of a compactly supported electric potential and in dimension $n\geq 3$ uniqueness for the fixed energy scattering problem was given in \cite{18,22,25}. In the earlier paper \cite{24} this was done for small potentials. It is well known that for compactly supported potentials, knowledge of the scattering amplitude (or the scattering matrix) at fixed energy $\lambda$ is equivalent to knowing the Dirichlet-to-Neumann map for the Schrödinger equation measured on the boundary of a large ball containing the support of the potential (see \cite{34} for an account). Then the uniqueness result of Sylvester and Uhlmann \cite{32} for the Dirichlet-to-Neumann map, based on special solution called complex geometrical optics solutions, implies uniqueness at a fixed energy for compactly supported potentials. Melrose \cite{17} proposed a related proof that uses the density of products of scattering solutions.
\medskip

The uniqueness result with fixed energy was extended by Novikov to the case of exponentially decaying potentials \cite{23}. Another proof applying arguments similar to the ones used for studying the Dirichlet-to-Neumann map was given in \cite{35}. The fixed energy result for compactly supported potentials in the two-dimensional case follows from the corresponding uniqueness result for the Dirichlet-to-Neumann map of Bukhgeim \cite{2}, and this result was recently extended to potentials decaying faster than any Gaussian in \cite{8}.
\medskip

We note that in the absence of exponential decay for the potentials, there are counterexamples to uniqueness for inverse scattering at fixed energy. In two dimensions Grinevich and Novikov \cite{7} give a counterexample involving $V$ in the Schwartz class, and in dimension three there are counterexamples with potentials decaying like $|x|^{-3/2}$ \cite{21,27}. However, if the potentials have regular behavior at infinity (outside a ball they are given by convergent asymptotic sums of homogeneous functions in the radial variable), one still has uniqueness even in the magnetic case by the results of Weder and Yafaev \cite{37,38} (see also Joshi and Sá Barreto \cite{12,14}).
\medskip

In the case of two-body Schrödinger Hamiltonians $H$ with $V$ short range, such a problem has been studied in \cite{4} with high-frequency asymptotic methods. For short or long-range potentials, Enss and Weder \cite{5} have used a geometrical method. They show that the potential is uniquely recovred by the high-velocity limit of the scattering operator. This method can be used to study Hamiltonians with electric and magnetic potentials on $L^2(\R^n)$, the Dirac equation, \cite{7} and the $N$-body case \cite{5}. In \cite{11}, Nicoleau used a stationary method to study Hamiltonians with smooth electric and magnetic potentials have to be $\mathcal{C}^\infty$ functions with stronger decay assumption on higher derivatives, based on the construction of suitable modified wave operators. This approach gives the complete asymptotic expansion of the Scattering operator at high energies. In \cite{12} the author sees that the problem with obstacles can be treated in the same way by determining a class of test functions which have negligible interaction with the obstacle.
\medskip

All the mentioned papers are concerned only with uniqueness or reconstruction formula of the coefficients. Inspired by the work of Enss and Weder \cite{5} and following the same strategy as in \cite{5}, we prove in this paper stability estimates in the recovery of the unknown coefficient $V$ via the scattering map.
\medskip

The paper is organized as follows. In Section 2 we examine the scattering  problem associated with \eqref{1.1}, by using the geometric time-dependent method developed by Enss and Weder. In Section 3, we prove some intermediate estimate of the $X$-ray transform of the potential $V$. In Section 4, we
estimate the $X$-ray transform and the Fourier transform of the potential, in terms of the scattering map and we proof Theorem \ref{T.1}. 
\section{Scattering map}
\setcounter{equation}{0}
Here we recall some basic definitions of the scattering theory used throughout the paper. The Fourier transform on functions in $\R^n$ is defined by
$$
\hat{f}(\xi):=\F(f)(\xi)=\frac{1}{(2\pi)^{n/2}}\int_{\R^n} e^{-ix\cdot\xi}f(x)dx,
$$
and the inverse Fourier transform is
$$
f(x)=\F^{-1}(\hat{f})(x)=\frac{1}{(2\pi)^{n/2}}\int_{\R^n} e^{ix\cdot\xi}\hat{f}(\xi)d\xi.
$$
For $s\geq 0$, letting  $H^s(\R^n)$ stand for the standard Sobolev space of those measurable functions $f$ whose Fourier transform $\hat{f}$ satisfies
$$
\norm{f}_{H^s(\R^n)}=\left(\int_{\R^n}\seq{\xi}^{2s}\abs{\hat{f}(\xi)}^2d\xi\right)^{1/2}<\infty,\quad \seq{\,\cdot\,}=(1+\abs{\cdot}^2)^{1/2}.
$$
For $\delta>0$, introducing the Hilbert space $L^2_\delta(\R^n)$ be the weighted $\LL$ space in $\R^n$ with norm
$$
\norm{u}_{L^2_\delta(\R^n)}=\norm{\seq{\,\cdot\,}^\delta u}_{\LL}.
$$
We see that the Fourier transform $\F$ is a unitary transformation from $H^s(\R^n)$ onto $L_s^2(\R^n)$, that is 
\begin{equation}\label{2.3.1}
\norm{u}_{L^2_\delta(\R^n)}=\norm{\F(u)}_{H^\delta(\R^n)},\quad \forall u\in\mathcal{S}(\R^n).
\end{equation}
Let $e^{-itH_0}$ be the Schrödinger propagator, in term of the Fourier transform, this is given by
\begin{equation}\label{2.4}
e^{-itH_0}u=\F^{-1}(e^{-it\frac{\abs{\xi}^2}{2}}\F(u))(x)=\frac{1}{(2\pi)^{n/2}}\int_{\R^n} e^{ix\cdot\xi}e^{-it\frac{\abs{\xi}^2}{2}}\hat{u}(\xi)d\xi.
\end{equation}
We also record the following properties of the wave operators $W_\pm$
\begin{equation}\label{2.5}
W_\pm^*W_\pm=I,\quad e^{-itH}W_\pm=W_\pm e^{-itH_0}.
\end{equation}
By Duhamel's formula, we have
\begin{equation}\label{2.6}
W_\pm=I+i\int_0^{\pm\infty} e^{itH}V e^{-itH_0}\, dt.
\end{equation}
The proof of \eqref{2.6} proceeds  by differentiation and subsequent integration: For $u\in \mathscr{D}(H_0)=\mathscr{D}(H)$ one has the product rule 
\begin{eqnarray*}
\frac{d}{dt}\left(e^{itH}e^{-itH_0}u\right)&=&e^{itH}iHe^{-itH_0}u-e^{itH}iH_0e^{-itH_0}u\cr
&=&ie^{itH}Ve^{-itH_0}u.
\end{eqnarray*}
This is now integrated to yield
$$
e^{itH}e^{-itH_0}u-u=i\int_0^t e^{isH}Ve^{-isH_0}uds,
$$
from which \eqref{2.6} follows after taking the limit $t\to\infty$.\\
Then from \eqref{2.6}, we find out that
\begin{equation}\label{2.7}
(W_+-W_-)u=i\int_{-\infty}^{\infty}  e^{itH}V e^{-itH_0} u\, dt,
\end{equation}
for any state $u\in\LL$ for which the integral is well defined. We have a similar formula for $W_\pm^*$
$$
W^*_\pm=I+i\int^0_{\pm\infty} e^{itH_0}V e^{-itH}\, dt.
$$
It follows from the definition of the scattering operators that
$$
S_V-I=(W_+-W_-)^*W_-.
$$
Then by Duhamel's formula and the interwining relation \eqref{2.5}, we have the following identity giving a relation between the scattering operator $S_V$ and the potential $V$
\begin{equation}\label{2.10}
i(S_V-I)u=\int_{-\infty}^{+\infty} e^{itH_0}V W_- e^{-itH_0}u\, dt,\quad u\in\LL.
\end{equation}
We need some elementary facts about pseudo-differential operators defined by the equality
$$
a(D)u(x)=\frac{1}{(2\pi)^{n/2}}\int_{\R^n} e^{ix\cdot\xi} a(\xi)\hat{u}(\xi)d\xi,\quad \forall\,u\in\mathcal{S}(\R^n),
$$
where the symbol $a\in\mathcal{C}_0^\infty(\R^n)$. It is then known that for any $a\in\mathcal{C}_0^\infty(\R^n)$, $a(D)$ is bounded operator on $\LL$.\\
For $\rr\in\R^n$, we define the conjugate pseudo-differential operator $a_\rr(D)$ by
\begin{equation}\label{2.12}
a_\rr(D)=e^{-ix\cdot\rr}a(D)e^{ix\cdot\rr}:=a(D+\rr).
\end{equation}
The symbol of the operator $a_\rr(D)$ is given by $a_\rr(\xi)=a(\xi+\rr)$. Indeed, using the fact that $\F(e^{ix\cdot\rr}u)(\xi)=\hat{u}(\xi-\rr)$, we get
$$
a_\rr(D)u(x)=\frac{1}{(2\pi)^{n/2}}\int_{\R^n} e^{ix\cdot(\xi-\rr)}a(\xi)\hat{u}(\xi-\rr)d\xi,\quad \forall\,u\in\mathcal{S}(\R^n).
$$
We define the linear unitary operator $\E$ from $\LL$ into itself by the integral representation 
$$
\E u(x)=e^{-it\rr\cdot D} u(x)=\frac{1}{(2\pi)^{n/2}}\int_{\R^n} e^{ix\cdot\xi} e^{-it\rr\cdot\xi}\hat{u}(\xi)d\xi=u(x-t\rr),\quad \forall\,u\in\mathcal{S}(\R^n).
$$
Hence, we obtain the following identity 
\begin{equation}\label{2.15}
E_{-\rr}^t\, w\, \E u(x)=w(x+t\rr)u(x), \quad \forall\,w,\,u\in\LL.
\end{equation}
By a simple calculation, it is easy to see that
\begin{equation}\label{2.16}
e^{-ix\cdot\rr}e^{-itH_0}e^{ix\cdot\rr}=e^{-it\abs{\rr}^2}\E e^{-itH_0},\quad\textrm{in}\,\LL.
\end{equation}
Let us recall the following result proved in Reed and Simon \cite{Reed-Simon}, XI, page 39. The key of the proof is the application of the stationary phase method.
\begin{lemma}\label{L2.1}
Let $g\in\mathcal{S}(\R^n)$ be a function such that $\hat{g}$ has a compact support. Let $\mathcal{O}$ be an open set containing the compact $\textrm{Supp}(\hat{g})$. Let 
\begin{equation}\label{2.17}
g_t(x)=\frac{1}{(2\pi)^{n/2}}\int_{\R^n} e^{ix\cdot\xi}e^{-i\frac{t}{2}\abs{\xi}^2}\hat{g}(\xi)d\xi.
\end{equation}
Then, for any $m\in\N$, there exists $C>0$ depending on $m$, $g$ and $\textrm{Supp}(\hat{g})$ so that
$$
\abs{g_t(x)}\leq C(1+\abs{x}+\abs{t})^{-m},
$$
for all $x,t$ with $xt^{-1}\notin \mathcal{O}$.
\end{lemma}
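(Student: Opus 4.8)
The plan is to prove this as a standard non-stationary phase estimate, the point being that under the hypothesis the phase has no critical point on $\textrm{Supp}(\hat g)$. Writing the phase as $\phi(\xi)=x\cdot\xi-\frac{t}{2}\abs{\xi}^2$, its gradient is $\Dd_\xi\phi(\xi)=x-t\xi$, which vanishes only at the single point $\xi=xt^{-1}$ (for $t\neq 0$). The hypothesis $xt^{-1}\notin\mathcal{O}$ together with $\textrm{Supp}(\hat g)\subset\mathcal{O}$ forces this critical point to lie off the support, so $\Dd_\xi\phi$ is nowhere zero on the region of integration and we are in the oscillatory, non-stationary regime.

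First I would establish a quantitative lower bound on $\abs{\Dd_\xi\phi}$. Let $K=\textrm{Supp}(\hat g)$, a compact set, set $R=\max_{\xi\in K}\abs{\xi}$ and $d_0=\textrm{dist}(K,\R^n\setminus\mathcal{O})>0$ (the case $\mathcal{O}=\R^n$ being vacuous). When $xt^{-1}\notin\mathcal{O}$ one has $\abs{\xi-xt^{-1}}\geq d_0$ for every $\xi\in K$, hence $\abs{x-t\xi}=\abs{t}\,\abs{\xi-xt^{-1}}\geq d_0\abs{t}$; on the other hand $\abs{x-t\xi}\geq\abs{x}-R\abs{t}$. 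Splitting into the cases $\abs{x}\leq 2R\abs{t}$ and $\abs{x}>2R\abs{t}$ and combining the two inequalities yields a constant $c>0$, depending only on $d_0$ and $R$, such that
$$
\abs{\Dd_\xi\phi(\xi)}=\abs{x-t\xi}\geq c\para{\abs{x}+\abs{t}},\qquad \xi\in K.
$$

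Next I would exploit the oscillation by repeated integration by parts. Introduce the first-order operator $L=\big(i\abs{\Dd_\xi\phi}^2\big)^{-1}\,\Dd_\xi\phi\cdot\Dd_\xi$, which satisfies $Le^{i\phi}=e^{i\phi}$, and let ${}^{t}L$ denote its formal transpose. Writing $g_t(x)=(2\pi)^{-n/2}\int_{K}e^{i\phi}\hat g\,d\xi$ and inserting $L^m e^{i\phi}=e^{i\phi}$, integrating by parts $m$ times gives
$$
g_t(x)=\frac{1}{(2\pi)^{n/2}}\int_{K}e^{i\phi}\,\para{{}^{t}L}^m\hat g\,d\xi .
$$
The coefficient $b=\Dd_\xi\phi\,\big(i\abs{\Dd_\xi\phi}^2\big)^{-1}$ satisfies $\abs{b}\leq C(\abs{x}+\abs{t})^{-1}$ by the lower bound; since $\phi$ is quadratic one has $\Dd_\xi^2\phi=-tI$ and $\Dd_\xi^3\phi=0$, so a homogeneity count gives $\abs{\Dd_\xi^\alpha b}\leq C_\alpha\abs{t}^{\abs{\alpha}}\abs{\Dd_\xi\phi}^{-\abs{\alpha}-1}\leq C_\alpha'(\abs{x}+\abs{t})^{-1}$ for every multi-index $\alpha$, using $\abs{t}\leq\abs{x}+\abs{t}\leq c^{-1}\abs{\Dd_\xi\phi}$ on $K$. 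Expanding $\para{{}^{t}L}^m\hat g$ by the product rule therefore produces a finite sum of terms $c_\beta(\xi)\,\Dd_\xi^\beta\hat g$ with $\abs{\beta}\leq m$ and $\abs{c_\beta(\xi)}\leq C_\beta(\abs{x}+\abs{t})^{-m}$.

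Finally I would conclude the estimate. Bounding the last integral by the measure of $K$ times the suprema of the finitely many $\Dd_\xi^\beta\hat g$ yields $\abs{g_t(x)}\leq C_m(\abs{x}+\abs{t})^{-m}$ on the set where $xt^{-1}\notin\mathcal{O}$ and $t\neq 0$. Combining this decay (used when $\abs{x}+\abs{t}\geq 1$) with the trivial uniform bound $\abs{g_t(x)}\leq(2\pi)^{-n/2}\norm{\hat g}_{L^1(\R^n)}$ (used when $\abs{x}+\abs{t}\leq 1$) upgrades the right-hand side to $C(1+\abs{x}+\abs{t})^{-m}$, which is the claim. I expect the genuine obstacle to be the uniform lower bound of the first step, namely extracting the constant $c>0$ independent of $x,t,\xi$ from the compactness of $K$ and its separation from $\mathcal{O}^{c}$, and arranging it so that the $\abs{x}$ and $\abs{t}$ contributions are captured simultaneously; once that bound is in hand the integration by parts is routine and the dependence of $C$ on $m$, $g$ and $\textrm{Supp}(\hat g)$ is transparent.
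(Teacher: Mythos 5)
Your proof is correct. The paper offers no proof of this lemma at all---it simply recalls it from Reed and Simon \cite{Reed-Simon} (Vol.~III, Ch.~XI, p.~39)---and the argument given in that reference is exactly yours: a uniform lower bound $\abs{x-t\xi}\geq c\para{\abs{x}+\abs{t}}$ for $\xi\in\textrm{Supp}(\hat{g})$ when $xt^{-1}\notin\mathcal{O}$, followed by repeated integration by parts with the non-stationary-phase operator, so your write-up supplies the omitted standard proof rather than deviating from it.
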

In the sequel, for $t\in\R^*$ and $\rr\in\R^n$, we denote by $A_1$ and $A_2$ the following sets
\begin{equation}\label{2.19}
A_1=\set{\abs{x-t\rr}>\frac{1}{2}\abs{t\rr}},\quad A_2=\set{\abs{x}<\frac{1}{4}\abs{t\rr}}.
\end{equation}
For a measurable set $A\subset\R^n$, we denote by $\kappa_A$ the characteristic function of $A$.
\begin{lemma}\label{L2.2}
Let $\rr\in\R^n$ such that $\abs{\rr}>4$, $t\in\R^*$, and let consider the two measurable sets $A_1$ and $A_2$ given by \eqref{2.19}. Then for any $a\in\mathcal{C}_0^\infty(B(0,1))$, and all $k\in\N$ there exists $C$ such that
\begin{equation}\label{2.20}
\norm{\kappa_{A_1}e^{-itH_0}a_{-\rr}(D)\kappa_{A_2} u}_{\LL}\leq C\seq{t\rr}^{-k}\norm{u}_{\LL},
\end{equation}
for any $u\in\LL$. Here $C$ depends only on $k$ and $n$ but not depends on $\rr$.
\end{lemma}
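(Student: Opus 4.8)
The plan is to prove a pointwise bound on the Schwartz kernel of the operator $\kappa_{A_1}e^{-itH_0}a_{-\rr}(D)\kappa_{A_2}$ and then convert it into an $\LL$-bound by Schur's test. Since $e^{-itH_0}a_{-\rr}(D)$ is the Fourier multiplier with symbol $e^{-it\abs{\xi}^2/2}a(\xi-\rr)$, its kernel is
$$
K_0(x,y)=\frac{1}{(2\pi)^n}\int_{\R^n}e^{i(x-y)\cdot\xi}e^{-it\abs{\xi}^2/2}a(\xi-\rr)\,d\xi.
$$
First I would perform the change of variables $\xi=\eta+\rr$ and expand $\abs{\eta+\rr}^2=\abs{\eta}^2+2\eta\cdot\rr+\abs{\rr}^2$. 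This pulls out the unimodular factor $e^{i(x-y)\cdot\rr}e^{-it\abs{\rr}^2/2}$ and leaves the integral $\int_{\R^n} e^{i\eta\cdot(x-y-t\rr)}e^{-it\abs{\eta}^2/2}a(\eta)\,d\eta$, which is exactly $(2\pi)^{n/2}g_t(x-y-t\rr)$ for the function $g$ with $\hat g=a$ appearing in Lemma \ref{L2.1}. Hence $\abs{K_0(x,y)}=(2\pi)^{-n/2}\abs{g_t(x-y-t\rr)}$, so all the oscillation is packaged into a single translated copy of $g_t$.

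The geometric heart of the argument is to check that, on the support $A_1\times A_2$ of $\kappa_{A_1}(x)\kappa_{A_2}(y)$, the argument $x-y-t\rr$ lands in the non-stationary region of Lemma \ref{L2.1}. Indeed, for $x\in A_1$ and $y\in A_2$ the triangle inequality gives
$$
\abs{x-y-t\rr}\geq \abs{x-t\rr}-\abs{y}>\tfrac12\abs{t\rr}-\tfrac14\abs{t\rr}=\tfrac14\abs{t\rr}=\tfrac14\abs{t}\abs{\rr}.
$$
Since $\abs{\rr}>4$, this forces $\abs{(x-y-t\rr)t^{-1}}>\tfrac14\abs{\rr}>1$, so taking $\mathcal{O}=B(0,1)$ (an open set containing $\mathrm{Supp}(a)$) the point $(x-y-t\rr)t^{-1}$ lies outside $\mathcal{O}$. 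Lemma \ref{L2.1} then applies with $\hat g=a$ and yields, for every $m\in\N$,
$$
\abs{K_0(x,y)}\leq C(1+\abs{x-y-t\rr}+\abs{t})^{-m}
$$
on $A_1\times A_2$, with $C$ depending on $m$, $n$ and the fixed symbol $a$ but not on $\rr$ or $t$, precisely because the shift by $t\rr$ is absorbed into the argument of $g_t$.

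Next I would split the decay to separate the gain in $\seq{t\rr}$ from an integrable tail. Using the lower bound $\abs{x-y-t\rr}>\tfrac14\abs{t\rr}$ once more, write $m=k+(n+1)$ and estimate
$$
(1+\abs{x-y-t\rr}+\abs{t})^{-m}\leq \para{1+\tfrac14\abs{t\rr}}^{-k}(1+\abs{x-y-t\rr})^{-(n+1)}\leq C\seq{t\rr}^{-k}(1+\abs{x-y-t\rr})^{-(n+1)},
$$
where the last step uses $\seq{t\rr}\leq 1+\abs{t\rr}\leq 4\para{1+\tfrac14\abs{t\rr}}$. The factor $\seq{t\rr}^{-k}$ is now constant in the kernel, and the remaining weight is integrable: for fixed $x$ the substitution $w=x-y-t\rr$ gives $\int_{\R^n}(1+\abs{x-y-t\rr})^{-(n+1)}\,dy=\int_{\R^n}(1+\abs{w})^{-(n+1)}\,dw<\infty$, and likewise for fixed $y$ integrating in $x$. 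Hence both Schur sums obey $\sup_x\int\abs{K_0(x,y)}\kappa_{A_2}(y)\,dy\leq C\seq{t\rr}^{-k}$ and $\sup_y\int\kappa_{A_1}(x)\abs{K_0(x,y)}\,dx\leq C\seq{t\rr}^{-k}$, and Schur's test gives the operator norm bound $C\seq{t\rr}^{-k}$, which is \eqref{2.20}.

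The main point to get right — and the only place where the hypothesis $\abs{\rr}>4$ and the particular constants $\tfrac12,\tfrac14$ in the definition of $A_1,A_2$ are used — is the separation estimate of the second paragraph, which guarantees we are in the regime where Lemma \ref{L2.1} delivers arbitrarily fast decay; once that is in place the rest is bookkeeping of exponents and an application of Schur's test. I would also emphasize that the $\rr$-independence of $C$ asserted in the statement is exactly the observation that the constant supplied by Lemma \ref{L2.1} sees only the fixed symbol $a$ (through $\hat g=a$) and the chosen order $m=k+n+1$, never the shift $t\rr$ itself.
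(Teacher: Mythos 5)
Your proof is correct, and its core is the same as the paper's: the same kernel representation of $e^{-itH_0}a_{-\rr}(D)$, the same conjugation pulling the shift into a translation by $t\rr$ in the argument of the free evolution of $g=\F^{-1}(a)$ (the paper's identity \eqref{2.25}), the same separation estimate $\abs{x-y-t\rr}>\frac{1}{4}\abs{t\rr}$ on $A_1\times A_2$, and the same use of Lemma \ref{L2.1}, with $\abs{\rr}>4$ guaranteeing the non-stationary condition $\abs{(x-y-t\rr)t^{-1}}>1$. Where you genuinely differ is in converting the kernel bound into an operator bound: you split off the factor $\seq{t\rr}^{-k}$ from the pointwise decay and apply Schur's test to the remaining integrable weight $(1+\abs{x-y-t\rr})^{-(n+1)}$, whereas the paper applies Cauchy--Schwarz to get the bound in \eqref{2.23} by $\int\abs{\tilde a_t^{\rr}(z)}^2(\kappa_{A_1}*\check\kappa_{A_2})(z)\,dz$, then exploits the support property $\textrm{Supp}(\kappa_{A_1}*\check\kappa_{A_2+t\rr})\subset\set{\abs{x}\geq\frac{1}{4}\abs{t\rr}}$ together with the volume bound $\norm{\kappa_{A_1}*\check\kappa_{A_2}}_{L^\infty(\R^n)}\leq C\abs{t\rr}^n$. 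Your route avoids that polynomial volume factor $\abs{t\rr}^n$ entirely (and with it the exponent bookkeeping $2m-k>2n$ needed to absorb it), at the mild cost of invoking Schur's test rather than bare Cauchy--Schwarz; the paper's route is more self-contained but must take $m$ larger to swallow the extra power of $\abs{t\rr}$. Both arguments give the uniformity in $\rr$ for the same reason you emphasize: the constant of Lemma \ref{L2.1} depends only on the fixed profile $a$ and the order $m$, never on the translation $t\rr$.
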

\begin{proof}
Let  $a\in\mathcal{C}_0^\infty(B(0,1))$, $A_1$ and $A_2$ given by \eqref{2.19}. For $u\in\mathcal{S}(\R^n)$, using \eqref{2.4} and \eqref{2.17}, we easily see that
\begin{eqnarray*}
\kappa_{A_1}e^{-itH_0}a_{-\rr}(D)\kappa_{A_2} u(x) &=&\frac{1}{(2\pi)^{n/2}}\int_{\R^n}e^{ix\cdot\xi} e^{-i\frac{t}{2}\abs{\xi}^2}\kappa_{A_1}(x)a(\xi-\rr)\F(\kappa_{A_2}u)(\xi)\,d\xi\cr
&=&\frac{1}{(2\pi)^{n}}\int_{\R^n}e^{ix\cdot\xi} e^{-i\frac{t}{2}\abs{\xi}^2}\kappa_{A_1}(x)a(\xi-\rr)\int_{\R^n}e^{-iy\cdot\xi}\kappa_{A_2}(y)u(y)dy\,d\xi\cr
&=& \frac{1}{(2\pi)^{n}}\int_{\R^n}\kappa_{A_1}(x)\kappa_{A_2}(y)\tilde{a}_t^{\rr}(x-y)u(y)dy\,,
\end{eqnarray*}
where the kernel $\tilde{a}_t^{\rr}$ is given by
$$
\tilde{a}_t^{\rr}(z)=\int_{\R^n}e^{iz\cdot\xi} e^{-i\frac{t}{2}\abs{\xi}^2}a(\xi-\rr)d\xi.
$$
Therefore, we have
\begin{eqnarray}\label{2.23}
\norm{\kappa_{A_1}e^{-itH_0}a_{-\rr}(D)\kappa_{A_2} u}^2_{\LL} &=& \frac{1}{(2\pi)^{2n}}\int_{\R^n}\kappa_{A_1}(x)\abs{\int_{\R^n}\kappa_{A_2}(y)\tilde{a}_t^{\rr}(x-y)u(y)dy}^2dx\cr
&\leq & C  \para{\int_{\R^n}\kappa_{A_1}(x)\para{\int_{\R^n}\kappa_{A_2}(y)\abs{\tilde{a}_t^{\rr}(x-y)}^2dy}dx}\norm{u}^2_{\LL}\cr
&\leq & \para{\int_{\R^n}\abs{\tilde{a}_t^{\rr}(z)}^2\para{\int_{\R^n}\kappa_{A_1}(x)\kappa_{A_2}(x-z)dx}dz}\norm{u}^2_{\LL}\cr
&\leq & \para{\int_{\R^n}\abs{\tilde{a}_t^{\rr}(z)}^2(\kappa_{A_1}*\check{\kappa}_{A_2})(z) dz}\norm{u}^2_{\LL},
\end{eqnarray}
where $\check{\kappa}_{A_2}(x)=\kappa_{A_2}(-x)$. \\
By a simple computation, we get
\begin{eqnarray}\label{2.25}
\tilde{a}_t^{\rr}(x)&=& e^{ix\cdot\rr} e^{-i\frac{t}{2}\abs{\rr}^2} \int_{\R^n}e^{i(x-t\rr)\cdot\xi} e^{-i\frac{t}{2}\abs{\xi}^2}a(\xi)d\xi\cr
&=& e^{ix\cdot\rr} e^{-i\frac{t}{2}\abs{\rr}^2}  \tilde{a}_t(x-t\rr).
\end{eqnarray}
Thus, we arrive at
\begin{eqnarray*}
\int_{\R^n}\abs{\tilde{a}_t^{\rr}(z)}^2(\kappa_{A_1}*\check{\kappa}_{A_2})(z) dz &\leq &\int_{\R^n}\abs{\tilde{a}_t(x)}^2(\kappa_{A_1}*\check{\kappa}_{A_2})(x+t\rr) dx\cr
&\leq & \int_{\R^n}\abs{\tilde{a}_t(x)}^2(\kappa_{A_1}*\check{\kappa}_{(A_2+t\rr)})(x) dx.
\end{eqnarray*}
Since, $\textrm{Supp}(\kappa_{A_1}*\check{\kappa}_{A_2+t\rr})\subset A_1-(A_2+t\rr)\subset\set{\abs{x}\geq \frac{1}{4}\abs{t\rr}}$, and
$$
\norm{\kappa_{A_1}*\check{\kappa}_{A_2}}_{L^\infty(\R^n)}\leq \norm{\kappa_{A_2}}_{L^1(\R^n)}\leq  C\abs{t\rr}^n,
$$
the above, inserted in \eqref{2.23} yields the following inequality
\begin{equation}\label{2.24}
\norm{\kappa_{A_1}e^{-itH_0}a_{-\rr}(D)\kappa_{A_2} u}^2_{\LL} \leq C\abs{t\rr}^n\para{\int_{\set{\abs{x}\geq \frac{1}{4}\abs{t\rr}  }}\abs{\tilde{a}_t (x)}^2 dx}\norm{u}^2_{\LL}.
\end{equation}
Let $r= \frac{1}{4}\abs{t\rr}$. In view of \eqref{2.25}, we get from Lemma \ref{L2.1} for $m\in\N$, with $2m-k>2n$, that
\begin{eqnarray}\label{2.26}
\abs{t\rr}^n\int_{\set{\abs{x}\geq \frac{1}{4}\abs{t\rr} }}\abs{\tilde{a}_t (x)}^2 dx 
&\leq & Cr^n\int_{\set{\abs{x}\geq r}} \abs{\tilde{a}_t (x)}^2 dx\cr
&\leq & C\seq{r}^{-k}\int_{\R^n}\seq{x}^{-2m+k+n}dx,
\end{eqnarray}
provided that $r>\abs{t}$, which satisfied if $\abs{\rr}>4$.\\
Combining \eqref{2.26} and \eqref{2.24}, we immediately deduce \eqref{2.20}.\\
This completes the proof.
\end{proof}
\begin{lemma}\label{L2.3}
Let $V\in\V$. Then for any $a\in\mathcal{C}_0^\infty(B(0,1))$ and every $\rr\in\R^n$, we have
\begin{equation}\label{2.27}
\norm{Ve^{-itH_0}a_{-\rr}(D)u}_{\LL}=\norm{V(x+t\rr)e^{-itH_0}a(D)e^{-ix\cdot\rr}u}_{\LL}
\end{equation}
for any $u\in\LL$.
\end{lemma}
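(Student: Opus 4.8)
The plan is to prove \eqref{2.27} by a pure operator-conjugation computation, since both sides are $\LL$ norms of unitarily equivalent vectors; there is no genuine estimate to make, only careful bookkeeping of multiplication, modulation, and translation operators. Because $V$ is bounded (the short-range bound gives $\abs{V(x)}\le C\seq{x}^{-\delta}\le C$), every operator appearing below is bounded on $\LL$, so it suffices to verify the identity on the dense subspace $\mathcal{S}(\R^n)$ and extend by continuity.

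First I would rewrite the pseudodifferential factor by means of \eqref{2.12} in the form $a_{-\rr}(D)=e^{ix\cdot\rr}a(D)e^{-ix\cdot\rr}$, so that $Ve^{-itH_0}a_{-\rr}(D)=Ve^{-itH_0}e^{ix\cdot\rr}a(D)e^{-ix\cdot\rr}$. Next I would use \eqref{2.16} to commute the free propagator past the modulation $e^{ix\cdot\rr}$: multiplying \eqref{2.16} on the left by $e^{ix\cdot\rr}$ gives $e^{-itH_0}e^{ix\cdot\rr}=e^{-it\abs{\rr}^2}e^{ix\cdot\rr}\E e^{-itH_0}$, which produces a unimodular scalar $e^{-it\abs{\rr}^2}$, the modulation $e^{ix\cdot\rr}$, and the translation operator $\E=e^{-it\rr\cdot D}$.

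At this stage the operator reads $e^{-it\abs{\rr}^2}Ve^{ix\cdot\rr}\E e^{-itH_0}a(D)e^{-ix\cdot\rr}$. Since $V$ and $e^{ix\cdot\rr}$ are both multiplication operators they commute, and the crux is to move $V$ through the translation $\E$. This is exactly where identity \eqref{2.15} enters: writing $V\E=\E\bigl(E^t_{-\rr}V\E\bigr)$ and applying \eqref{2.15} shows that $E^t_{-\rr}V\E$ is multiplication by the shifted potential $V(\,\cdot\,+t\rr)$, whence $V\E=\E\,V(\,\cdot\,+t\rr)$. Substituting yields $Ve^{-itH_0}a_{-\rr}(D)=e^{-it\abs{\rr}^2}e^{ix\cdot\rr}\E\,V(\,\cdot\,+t\rr)e^{-itH_0}a(D)e^{-ix\cdot\rr}$.

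Finally I would take $\LL$ norms. The scalar $e^{-it\abs{\rr}^2}$ has modulus one, multiplication by $e^{ix\cdot\rr}$ preserves the $L^2$ norm, and $\E$ is unitary by construction; stripping these three norm-preserving factors leaves precisely $\norm{V(x+t\rr)e^{-itH_0}a(D)e^{-ix\cdot\rr}u}_{\LL}$, which is \eqref{2.27}. The only point needing care—the main (mild) obstacle—is the correct use of \eqref{2.15} to realize $V\E$ as $\E$ followed by multiplication by $V(\,\cdot\,+t\rr)$; once that conjugation is set up correctly, the remainder is immediate and the identity holds for all $u\in\LL$ by density.
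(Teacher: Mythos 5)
Your proof is correct and takes essentially the same route as the paper's: rewrite $a_{-\rr}(D)$ via \eqref{2.12}, commute the free propagator past the modulation via \eqref{2.16} (picking up only a unimodular phase), realize the shifted potential $V(\,\cdot\,+t\rr)$ via \eqref{2.15}, and then discard the norm-preserving factors. The only cosmetic difference is that you assemble the full operator identity before taking norms, whereas the paper strips the unitary/unimodular factors inside the norm step by step.
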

\begin{proof}
By a density argument, it is enough to consider \eqref{2.27} for $u\in\mathcal{S}(\R^n)$. By \eqref{2.12}, we get
$$
\norm{Ve^{-itH_0}a_{-\rr}(D)u}_{\LL}=\norm{Ve^{-itH_0} e^{ix\cdot\rr}a(D)e^{-ix\cdot\rr }u}_{\LL}.
$$
Using \eqref{2.15} and \eqref{2.16}, we deduce that
\begin{eqnarray}\label{2.29}
\norm{Ve^{-itH_0} e^{ix\cdot\rr}a(D)e^{-ix\cdot\rr }u}_{\LL}&=&\norm{Ve^{ix\cdot\rr}\E e^{-itH_0} a(D)e^{-ix\cdot\rr }u}_{\LL}\cr
&=& \norm{E^t_{-\rr}V\E e^{-itH_0} a(D)e^{-ix\cdot\rr }u}_{\LL}\cr
&=&\norm{V(x+t\rr) e^{-itH_0} a(D)e^{-ix\cdot\rr }u}_{\LL}.
\end{eqnarray}
Thus we conclude the desired equality.
\end{proof}
\begin{lemma}\label{L2.4}
Let $V\in\V$. Then for any $a\in\mathcal{C}_0^\infty(B(0,1))$, and every $\rr\in\R^n$, $\abs{\rr}>4$, we have
\begin{equation}\label{2.31}
\norm{Ve^{-itH_0}a_{-\rr}(D)u}_{\LL}=\norm{V(x+t\rr)e^{-itH_0}a(D)e^{-ix\cdot\rr}u}_{\LL}\leq C\seq{t\rr}^{-\delta}\norm{u}_{L^2_\delta(\R^n)},
\end{equation}
for any $u\in L^2_\delta(\R^n)$.
\end{lemma}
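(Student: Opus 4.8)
The identity is exactly Lemma \ref{L2.3}, so it remains to establish the inequality, and by the first equality it suffices to bound $\norm{Ve^{-itH_0}a_{-\rr}(D)u}_{\LL}$. By density I may take $u\in\mathcal{S}(\R^n)$. The plan is to split both the input $u$ and the multiplication operator $V$ according to the geometry of the sets $A_1,A_2$ of \eqref{2.19}, and to feed the overlapping piece into the non-stationary phase estimate of Lemma \ref{L2.2}. Before decomposing I would record three uniform bounds: since $V\in\V$ one has $\abs{V(x)}\leq C\seq{x}^{-\delta}\leq C$, so $V$ is a bounded multiplication operator; $e^{-itH_0}$ is unitary; and $a_{-\rr}(D)$ is bounded on $\LL$ uniformly in $\rr$, with norm controlled by $\norm{a}_{L^\infty(\R^n)}$, since by \eqref{2.12} it is the conjugate of $a(D)$ by the unitary multiplication $e^{ix\cdot\rr}$.

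I would first write $u=\kappa_{A_2}u+(1-\kappa_{A_2})u$. On the complement of $A_2$ one has $\abs{x}\geq \tfrac14\abs{t\rr}$, whence $\seq{x}^{-\delta}\leq C\seq{t\rr}^{-\delta}$, and therefore $\norm{(1-\kappa_{A_2})u}_{\LL}\leq C\seq{t\rr}^{-\delta}\norm{u}_{L^2_\delta(\R^n)}$. Combining this with the three uniform bounds above controls $\norm{Ve^{-itH_0}a_{-\rr}(D)(1-\kappa_{A_2})u}_{\LL}$ by $C\seq{t\rr}^{-\delta}\norm{u}_{L^2_\delta(\R^n)}$.

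For the remaining term $Ve^{-itH_0}a_{-\rr}(D)\kappa_{A_2}u$ I would split the potential as $V=V\kappa_{A_1}+V(1-\kappa_{A_1})$, both acting as multiplication on the output. The first piece is $V\,\kappa_{A_1}e^{-itH_0}a_{-\rr}(D)\kappa_{A_2}u$; bounding $V$ in $L^\infty$ and invoking Lemma \ref{L2.2} with any integer $k\geq \delta$ gives a bound $C\seq{t\rr}^{-k}\norm{u}_{\LL}\leq C\seq{t\rr}^{-\delta}\norm{u}_{L^2_\delta(\R^n)}$, where I used $\seq{t\rr}\geq 1$ and $\norm{u}_{\LL}\leq \norm{u}_{L^2_\delta(\R^n)}$. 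For the second piece, on the complement of $A_1$ one has $\abs{x-t\rr}\leq \tfrac12\abs{t\rr}$, hence $\abs{x}\geq \tfrac12\abs{t\rr}$, so that $\norm{V(1-\kappa_{A_1})}_{L^\infty(\R^n)}\leq C\seq{t\rr}^{-\delta}$; since $e^{-itH_0}a_{-\rr}(D)\kappa_{A_2}$ is bounded, this term is $\leq C\seq{t\rr}^{-\delta}\norm{u}_{\LL}\leq C\seq{t\rr}^{-\delta}\norm{u}_{L^2_\delta(\R^n)}$. Summing the three contributions yields \eqref{2.31}.

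The only genuine work is the geometric bookkeeping. The decomposition must be arranged so that the region where $V$ is still large, namely near the origin inside $A_2$, where the weight of $u$ gives no decay, is precisely the region into which the free evolution has transported the frequency-$\rr$ wave packet, located near $\abs{x}\approx\abs{t\rr}$, i.e.\ inside $A_1$. This spatial mismatch is exactly what Lemma \ref{L2.2} quantifies, so the essential analytic input, the stationary/non-stationary phase decay, is already in hand; here one needs only to verify the two elementary pointwise weight estimates on the complements of $A_1$ and $A_2$ and to note that $\seq{t\rr}\geq 1$ lets one trade the arbitrary power $k$ for the fixed exponent $\delta$. The hypothesis $\abs{\rr}>4$ is used solely to license the application of Lemma \ref{L2.2}, and for $\seq{t\rr}$ bounded the claimed estimate is trivial by boundedness of the operators, so the content lies entirely in the regime of large $\abs{t\rr}$.
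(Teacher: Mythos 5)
Your proof is correct and is essentially the paper's own argument: the same sets $A_1,A_2$ of \eqref{2.19}, with the overlap term $V\kappa_{A_1}e^{-itH_0}a_{-\rr}(D)\kappa_{A_2}u$ estimated by Lemma \ref{L2.2}, the $A_2^c$ contribution absorbed into the weighted norm of $u$, and the $A_1^c$ contribution controlled by the decay of $V$. The only cosmetic difference is the order of splitting (you cut the input by $\kappa_{A_2}$ first and then the output by $\kappa_{A_1}$, while the paper does the reverse); the three resulting terms and their estimates coincide.
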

\begin{proof}
By a density argument, it is enough to consider \eqref{2.31} for $u\in\mathcal{S}(\R^n)$. Let $A_1$ and $A_2$ are given by \eqref{2.19}. Then, we obtain
\begin{multline*}
\norm{Ve^{-itH_0}a_{-\rr}(D)u}_{\LL}\leq \norm{V\kappa_{A_1}e^{-itH_0}a_{-\rr}(D)u}_{\LL}\cr
+\norm{V\kappa_{A^c_1}e^{-itH_0}a_{-\rr}(D)u}_{\LL}:=I_1+I_2.
\end{multline*}
To estimate $I_1$, note that 
$$
I_1\leq \norm{V\kappa_{A_1}e^{-itH_0}a_{-\rr}(D)\kappa_{A_2}u}_{\LL}+\norm{V\kappa_{A_1}e^{-itH_0}a_{-\rr}(D)\kappa_{A^c_2}u}_{\LL}.
$$
Hence, by Lemma \ref{L2.2}, we get
\begin{eqnarray}\label{2.34}
\norm{V\kappa_{A_1}e^{-itH_0}a_{-\rr}(D)\kappa_{A_2}u}_{\LL} &\leq & \norm{\kappa_{A_1}e^{-itH_0}a_{-\rr}(D)\kappa_{A_2}u}_{\LL} \cr
&\leq & C\seq{t\rr}^{-\delta}\norm{u}_{\LL}.
\end{eqnarray}
Furthermore, for any $u\in\mathcal{S}(\R^n)$, one has
\begin{equation}\label{2.35}
\norm{V\kappa_{A_1}e^{-itH_0}a_{-\rr}(D)\kappa_{A^c_2}u}_{\LL}\leq \norm{\kappa_{A^c_2}u}_{\LL}\leq C\seq{t\rr}^{-\delta}\norm{u}_{L^2_\delta(\R^n)}.
\end{equation}
Taking into account \eqref{2.34}, \eqref{2.35}, we see that
\begin{equation}\label{2.36}
I_1\leq C\seq{t\rr}^{-\delta}\norm{u}_{L^2_\delta(\R^n)}.
\end{equation} 
On the other hand, since $A_1^c\subset\set{\abs{x}\geq \frac{1}{2}\abs{t\rr}}$ and $V\in\V$, we also have that
\begin{eqnarray}\label{2.37}
I_2=\norm{V\kappa_{A^c_1}e^{-itH_0}a_{-\rr}(D)u}_{\LL} &\leq &  C\seq{t\rr}^{-\delta}\norm{e^{-itH_0}a_{-\rr}(D)u}_{\LL}\cr
&\leq & C\seq{t\rr}^{-\delta}\norm{u}_{\LL}.
\end{eqnarray}
Hence, by combining \eqref{2.37} and \eqref{2.36}, we conclude the proof of the Lemma.
\end{proof}
\begin{lemma}\label{L2.5}
Assume that $V\in\V$. Then there exists $C>0$ such that for any $\Phi\in\mathcal{S}(\R^n)$ with $\textrm{Supp}(\hat{\Phi})\subset B(0,1)$, we have
$$
\norm{(W_\pm-I)e^{-itH_0}\Phi_\rr}_{\LL}\leq C\abs{\rr}^{-1}\norm{\Phi}_{L^2_\delta(\R^n)},
$$
for any $\rr\in\R^n$, $\abs{\rr}>4$, and uniformly for $t\in\R$. Here $\Phi_\rr=e^{ix\cdot\rr}\Phi$.
\end{lemma}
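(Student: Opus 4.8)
The plan is to reduce the estimate to the pointwise-in-time bound already furnished by Lemma~\ref{L2.4} and then to integrate in time, with the decisive factor $\abs{\rr}^{-1}$ arising entirely from the time integration of $\seq{t\rr}^{-\delta}$ under the hypothesis $\delta>1$. First I would insert the Duhamel representation \eqref{2.6}: applying $W_\pm-I$ to the state $e^{-itH_0}\Phi_\rr$ gives
\begin{equation*}
(W_\pm-I)e^{-itH_0}\Phi_\rr=i\int_0^{\pm\infty}e^{isH}Ve^{-i(s+t)H_0}\Phi_\rr\,ds.
\end{equation*}
Since $e^{isH}$ is unitary on $\LL$, taking norms and passing the norm under the integral yields
\begin{equation*}
\norm{(W_\pm-I)e^{-itH_0}\Phi_\rr}_{\LL}\leq\int_0^{\pm\infty}\norm{Ve^{-i(s+t)H_0}\Phi_\rr}_{\LL}\,ds.
\end{equation*}

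Next I would put the integrand into the exact form handled by Lemma~\ref{L2.4}. Because $\textrm{Supp}(\hat\Phi)$ is a compact subset of the open ball $B(0,1)$, I can choose $a\in\mathcal{C}_0^\infty(B(0,1))$ with $a\equiv 1$ on $\textrm{Supp}(\hat\Phi)$. Using $\F(\Phi_\rr)(\xi)=\hat\Phi(\xi-\rr)$ together with the symbol identity $a_{-\rr}(\xi)=a(\xi-\rr)$ from \eqref{2.12}, one checks that $a_{-\rr}(D)\Phi_\rr=\Phi_\rr$, so that the integrand equals $\norm{Ve^{-i(s+t)H_0}a_{-\rr}(D)\Phi_\rr}_{\LL}$. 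Applying Lemma~\ref{L2.4} with $u=\Phi_\rr$ (admissible since $\abs{\rr}>4$), and observing that modulation leaves the weighted norm unchanged, $\norm{\Phi_\rr}_{L^2_\delta(\R^n)}=\norm{\Phi}_{L^2_\delta(\R^n)}$, I obtain
\begin{equation*}
\norm{Ve^{-i(s+t)H_0}\Phi_\rr}_{\LL}\leq C\seq{(s+t)\rr}^{-\delta}\norm{\Phi}_{L^2_\delta(\R^n)}.
\end{equation*}

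Finally I would integrate in $s$. The substitution $\tau=s+t$ bounds the $s$-integral, for either choice of sign, by the full-line integral $\int_{\R}\seq{\tau\rr}^{-\delta}\,d\tau$, which is independent of $t$; the change of variables $\eta=\tau\abs{\rr}$ then gives $\int_{\R}\seq{\tau\rr}^{-\delta}\,d\tau=\abs{\rr}^{-1}\int_{\R}(1+\eta^2)^{-\delta/2}\,d\eta$, and the remaining integral is a finite constant precisely because $\delta>1$. Combining the three steps produces the asserted bound $C\abs{\rr}^{-1}\norm{\Phi}_{L^2_\delta(\R^n)}$, uniformly in $t$. The only genuinely delicate point is this last step: one must verify that the time integral is uniform in $t$ (it is, since moving the lower endpoint from $0$ to $t$ only shrinks the domain relative to the whole line) and that the sole source of decay in $\abs{\rr}$ is the rescaling of the weight $\seq{\tau\rr}^{-\delta}$, so that the short-range hypothesis $\delta>1$ is exactly what renders the integral convergent and delivers the sharp rate $\abs{\rr}^{-1}$.
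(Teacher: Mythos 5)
Your proof is correct and follows essentially the same route as the paper's: Duhamel's formula \eqref{2.6}, insertion of $a_{-\rr}(D)$ via a cutoff equal to $1$ on $\textrm{Supp}(\hat\Phi)$, the pointwise-in-time bound of Lemma \ref{L2.4}, and the change of variables producing $\abs{\rr}^{-1}$ from $\int_\R\seq{\tau\rr}^{-\delta}d\tau$ with $\delta>1$. If anything, you are more explicit than the paper about the shift $\tau=s+t$ and the resulting uniformity in $t$, which the paper absorbs silently into its first inequality.
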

\begin{proof}
It follows from Duhamel's formula \eqref{2.7} that
$$
(W_+-I)e^{-itH_0}\Phi_\rr=i\int_0^\infty e^{isH}Ve^{-isH_0}e^{-itH_0}\Phi_\rr \,ds.
$$
Take $a\in\mathcal{C}_0^\infty(B(0,1))$, such that $a(\xi-\rr)\hat{\Phi}(\xi-\rr)=\hat{\Phi}(\xi-\rr)$, that is $a_{-\rr}(D)\Phi_\rr=\Phi_\rr$. Then by Lemma \ref{L2.4}, we get
\begin{eqnarray*}
\norm{(W_+-I)e^{-itH_0}\Phi_\rr }_{\LL} &\leq & \int_{-\infty}^{+\infty}\norm{Ve^{-is H_0}a_{-\rr}(D)\Phi_\rr  }_{L^2(\R^n)}ds\cr
&\leq & C\para{\int_\R\seq{s\rr}^{-\delta}ds}\norm{\Phi}_{L^2_\delta(\R^n)}\cr
&\leq & \frac{C}{\abs{\rr}}\para{\int_0^\infty\seq{\tau}^{-\delta}d\tau}\norm{\Phi}_{L^2_\delta(\R^n)},
\end{eqnarray*}
and the Lemma follows for $W_+$. The proof for $W_-$ is similar.
\end{proof}
\section{Stability of the X-ray transform of the potential}
\setcounter{equation}{0}
In this section we prove some estimate for the $X$-ray transform of the electric potential $V$. We start with a preliminary properties of the $X$-ray transform which needed to prove the main result.
\medskip

Let $\omega\in\s^{n-1}$, and $\omega^\bot$ the hyperplane through the origin orthogonal to $\omega$. We parametrize a line $\mathcal{L}(\omega,y)$ in $\R^n$ by specifying its direction $\omega\in \s^{n-1}$ and the point $y\in \omega^\bot$ where the line intersects the hyperpalne $\omega^\bot$. The $X$-ray transform of function $f\in L^1(\R^n)$ is given by
$$
X(f)(x,\omega)=X_\omega(f)(x)=\int_\R f(x+\tau\omega)d\tau,\quad x\in\omega^\bot.
$$
We see that $X(f)(x,\omega)$ is the integral of $f$ over the line $\mathcal{L}(\omega,y)$ parallel to $\omega$ which passes through $x\in\omega^\bot$. The following relation between the Fourier transform of $X_\omega(f)$ and $f$, called the Fourier slice theorem, will be useful: we denote by $\F_{\omega^\bot}$ the Fourier transform on function in the hyperplan $\omega^\bot$. The Fourier slice theorem is summarized in the following identity (see \cite{BJY}):  
\begin{eqnarray}\label{3.2}
\F_{\omega^\bot}(X_\omega(f))(\eta)&=&(2\pi)^{(1-n)/2}\int_{\omega^\bot} e^{-ix\cdot\eta}X_\omega(f)(x)dx\cr
&=&(2\pi)^{(1-n)/2}\int_{\omega^\bot} e^{-ix\cdot\eta}\int_\R f(x+s\omega)ds\,dx\cr
&=& (2\pi)^{(1-n)/2}\int_{\R^n} e^{-iy\cdot\eta} f(y)dy\cr
&=&\sqrt{2\pi}\F(f)(\eta),\quad \eta\in\omega^\bot.
\end{eqnarray}
The main purpose here is to present a preliminary estimate, which relates the difference of the short range potentials to the scattering map.
As before, we let $V_1,V_2\in\V$, $j=1,2$ be real valued potentials. We set
$$
V=V_1-V_2,
$$
such that
$$
\norm{V}_{\LL}\leq M.
$$
We start with the following Lemma.
\begin{lemma}\label{L3.1}
Let $V_j\in\V$, $j=1,2$. Then there exist $C>0$, $\lambda_0>0$ and $\gamma\in (0,1)$ such that for any $\omega\in\s^{n-1}$ and $\Phi,\Psi\in\mathcal{S}(\R^n)$ with $\textrm{Supp}(\hat{\Phi}),\,\textrm{Supp}(\hat{\Psi})\subset B(0,1)$, the following estimate holds true
\begin{multline}\label{3.3}
\abs{\int_{\R^n} X(V)(x,\omega)\Phi(x)\overline{\Psi}(x)dx}\leq C\sqrt{\lambda}\norm{S_{V_1}-S_{V_2}}\norm{\Phi}_{\LL}\norm{\Psi}_{\LL}\cr
+\lambda^{-\gamma/2}\para{\norm{\Phi}_{H^2(\R^n)}+\norm{\Phi}_{L^2_\delta(\R^n)}}\para{\norm{\Psi}_{H^2(\R^n)}+\norm{\Psi}_{L^2_\delta(\R^n)}}
\end{multline}
for any $\lambda>\lambda_0$. Here $V=V_1-V_2$.
\end{lemma}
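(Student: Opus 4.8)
The plan is to probe the scattering relation \eqref{2.10} with high-velocity wave packets and to read off the $X$-ray transform as the leading term of the high-velocity expansion, in the spirit of Enss--Weder. Fix $\omega\in\s^{n-1}$ and set $\rr=\sqrt\lambda\,\omega$, so that $v:=\abs\rr=\sqrt\lambda$, and form the modulated states $\Phi_\rr=e^{ix\cdot\rr}\Phi$ and $\Psi_\rr=e^{ix\cdot\rr}\Psi$. Denoting by $W_-^{(1)},W_-^{(2)}$ the incoming wave operators attached to $V_1,V_2$, I would write \eqref{2.10} for each potential, subtract, and use the algebraic identity $V_1W_-^{(1)}-V_2W_-^{(2)}=V+V_1(W_-^{(1)}-I)-V_2(W_-^{(2)}-I)$ with $V=V_1-V_2$ to obtain
$$
i\seq{(S_{V_1}-S_{V_2})\Phi_\rr,\Psi_\rr}=\int_\R\seq{Ve^{-itH_0}\Phi_\rr,e^{-itH_0}\Psi_\rr}\,dt+\mathcal R,
$$
where $\mathcal R$ gathers the two contributions containing $W_-^{(j)}-I$.

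For the principal integral I would insert \eqref{2.16} to get $e^{-itH_0}\Phi_\rr(x)=e^{ix\cdot\rr}e^{-it\abs\rr^2}(e^{-itH_0}\Phi)(x-t\rr)$ and the analogous formula for $\Psi_\rr$; the modulating phases cancel in the product $\seq{Ve^{-itH_0}\Phi_\rr,e^{-itH_0}\Psi_\rr}$, and the substitutions $y=x-t\rr$ and then $\tau=tv$ give
$$
\int_\R\seq{Ve^{-itH_0}\Phi_\rr,e^{-itH_0}\Psi_\rr}\,dt=\frac1v\int_{\R^n}X(V)(x,\omega)\Phi(x)\overline{\Psi}(x)\,dx+D,
$$
where $D$ is the error from replacing $e^{-i(\tau/v)H_0}$ acting on $\Phi,\Psi$ by the identity. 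Multiplying by $v=\sqrt\lambda$ and using $\abs{\seq{(S_{V_1}-S_{V_2})\Phi_\rr,\Psi_\rr}}\le\norm{S_{V_1}-S_{V_2}}\norm{\Phi}_\LL\norm{\Psi}_\LL$ (since $\norm{\Phi_\rr}_\LL=\norm{\Phi}_\LL$) produces the first term of \eqref{3.3}; it then remains to control $v\abs D$ and $v\abs{\mathcal R}$ by the second term.

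The bound on $D$ is, I expect, the main obstacle. After multiplication by $v$ it reads $\int_\R I(\tau)\,d\tau$ with $I(\tau)=\int_{\R^n}V(y+\tau\omega)F_\tau(y)\,dy$ and $F_\tau=(e^{-isH_0}\Phi)\overline{(e^{-isH_0}\Psi)}-\Phi\overline{\Psi}$, $s=\tau/v$; writing $F_\tau=(e^{-isH_0}\Phi-\Phi)\overline{e^{-isH_0}\Psi}+\Phi\,\overline{(e^{-isH_0}\Psi-\Psi)}$ reduces matters to a typical term, which I would estimate by splitting the $\tau$-integral at a threshold $R$. For $\abs\tau\le R$ the elementary bound $\norm{(e^{-isH_0}-I)\Phi}_\LL\le C\abs s\,\norm{\Phi}_{H^2(\R^n)}$ gives a contribution $CR^2v^{-1}\norm{\Phi}_{H^2(\R^n)}\norm{\Psi}_\LL$. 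For $\abs\tau>R$ I would drop the smallness in $s$ and use the decay of $V$ instead: on $\set{\abs y\le\abs\tau/2}$ one has $\abs{V(y+\tau\omega)}\le C\seq\tau^{-\delta}$, whereas on $\set{\abs y>\abs\tau/2}$ both $\seq y^{-2\delta}\le C\seq\tau^{-2\delta}$ (controlling the $\Phi,\Psi$ pieces through $\norm{\cdot}_{L^2_\delta(\R^n)}$) and the rapid far-field decay of $e^{-isH_0}\Phi,e^{-isH_0}\Psi$ furnished by Lemma \ref{L2.1} apply, the latter because $\abs y/\abs s>v/2$ lies outside a neighborhood of $\textrm{Supp}(\hat\Phi)\subset B(0,1)$ when $v>4$. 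A Cauchy--Schwarz estimate then yields $\abs{I(\tau)}\le C\seq\tau^{-\delta}\para{\norm{\Phi}_\LL+\norm{\Phi}_{L^2_\delta(\R^n)}}\para{\norm{\Psi}_\LL+\norm{\Psi}_{L^2_\delta(\R^n)}}$ for $\abs\tau>R$, whose integral is $CR^{-(\delta-1)}(\cdots)$. Choosing $R=v^{1/(\delta+1)}$ balances the two pieces and gives $v\abs D\le Cv^{-\gamma}(\cdots)$ with $\gamma=(\delta-1)/(\delta+1)\in(0,1)$, that is the required $\lambda^{-\gamma/2}$ bound.

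Finally, for the remainder I would bound each term of $\mathcal R$ by transferring the real multiplication $V_j$ onto the $\Psi$-factor and combining Lemma \ref{L2.5}, which gives $\norm{(W_-^{(j)}-I)e^{-itH_0}\Phi_\rr}_\LL\le Cv^{-1}\norm{\Phi}_{L^2_\delta(\R^n)}$ uniformly in $t$, with Lemma \ref{L2.4}, which gives $\norm{V_je^{-itH_0}\Psi_\rr}_\LL\le C\seq{t\rr}^{-\delta}\norm{\Psi}_{L^2_\delta(\R^n)}$; since $\int_\R\seq{t\rr}^{-\delta}\,dt\le Cv^{-1}$, this yields $\abs{\mathcal R}\le Cv^{-2}\norm{\Phi}_{L^2_\delta(\R^n)}\norm{\Psi}_{L^2_\delta(\R^n)}$, hence $v\abs{\mathcal R}\le C\lambda^{-1/2}(\cdots)$, which is absorbed into the $\lambda^{-\gamma/2}$ term because $\gamma<1$. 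Collecting the three contributions by the triangle inequality and taking $\lambda_0$ large enough that $v=\sqrt\lambda>4$ (so that Lemmas \ref{L2.4} and \ref{L2.5} are available) gives \eqref{3.3}.
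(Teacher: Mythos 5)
Your proposal follows the same Enss--Weder strategy as the paper: probe the identity \eqref{2.10} with the modulated packets $\Phi_\rr,\Psi_\rr$, $\rr=\sqrt\lambda\,\omega$; split $W_-^{(j)}=I+(W_-^{(j)}-I)$ into a leading term plus a remainder; pass to the moving frame via \eqref{2.15}--\eqref{2.16}; and estimate the remainder by Lemmas \ref{L2.4} and \ref{L2.5}, exactly as the paper does. Your one substantive deviation is in the leading-term error $D$: the paper uses the interpolation inequality \eqref{3.17}, $\norm{(e^{-isH_0}-I)\Psi}_\LL\leq C\abs{s}^\gamma\norm{\Psi}_{H^2(\R^n)}$ with $\gamma$ chosen so that $\delta-\gamma>1$, pairs it with the uniform decay \eqref{3.8}, and integrates over all $\tau$ at once; you instead split the $\tau$-integral at a threshold $R$ and optimize $R=v^{1/(\delta+1)}$, obtaining $\gamma=(\delta-1)/(\delta+1)$. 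That alternative is legitimate and yields an exponent of the same quality.

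The genuine gap is in your far-region ($\abs\tau>R$) estimate. On $\set{\abs y>\abs\tau/2}$ the potential $V(y+\tau\omega)$ has no decay, and while the unevolved factors $\Phi,\Psi$ can be handled by weights through $\norm{\cdot}_{L^2_\delta(\R^n)}$, the evolved factors $e^{-isH_0}\Phi$, $e^{-isH_0}\Psi$ cannot: they do not lie in a fixed weighted space, and you control them by applying Lemma \ref{L2.1} directly with $g=\Phi$ (resp.\ $g=\Psi$). But the constant in Lemma \ref{L2.1} depends on $g$ itself---through high-order derivatives of $\hat g$ in the stationary-phase argument---and not merely on the norms $\norm{g}_\LL$, $\norm{g}_{H^2(\R^n)}$, $\norm{g}_{L^2_\delta(\R^n)}$ appearing in \eqref{3.3}. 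Hence what your argument proves is \eqref{3.3} with $C=C(\Phi,\Psi)$, which is strictly weaker than the lemma as stated (there $C$ is fixed before quantifying over $\Phi,\Psi$), and this uniformity is not cosmetic: in Lemma \ref{L4.1} the estimate is applied to $\Psi=\Psi_\varepsilon=\F^{-1}(\psi_\varepsilon)$, a mollifier family whose Fourier transforms have derivatives blowing up as $\varepsilon\to0$, so a $\Psi$-dependent constant would destroy the $\varepsilon$-optimization of Section 4. The repair is exactly the device you already use for the remainder $\mathcal R$: fix a cutoff $a\in\mathcal{C}_0^\infty(B(0,1))$ with $a\hat\Phi=\hat\Phi$ and invoke Lemma \ref{L2.4}, i.e.\ \eqref{3.8}, $\norm{V_j(\cdot+\tau\omega)e^{-isH_0}a(D)\Phi}_\LL\leq C\seq{\tau}^{-\delta}\norm{\Phi}_{L^2_\delta(\R^n)}$, valid for all $\tau$ with $C$ depending only on $a$, $n$, $\delta$ and $M$; there Lemma \ref{L2.1} is applied only to the fixed symbol $a$, inside the proof of Lemma \ref{L2.2}. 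With that substitution your threshold argument closes---and in fact, once \eqref{3.8} is available for all $\tau$, combining it with \eqref{3.17} shows the time-splitting is not even needed.
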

\begin{proof}
Let $\rr=\sqrt{\lambda}\omega$ with $\lambda>0$ and $\omega\in\s^{n-1}$. In what follows for $\Phi,\,\Psi\in\mathcal{S}(\R^n)$ with $\textrm{Supp}(\hat{\Phi})$, and $\textrm{Supp}(\hat{\Psi})\subset B(0,1)$ we denote
$$
\Phi_\rr=e^{ix\cdot\rr}\Phi,\quad \Psi_\rr=e^{ix\cdot\rr}\Psi,\quad \rr=\sqrt{\lambda}\omega.
$$
From the identity \eqref{2.10} of the wave and scattering operators, it is easily seen that
\begin{equation}\label{3.5}
\sqrt{\lambda}\para{i(S_{V_j}-I)\Phi_\rr,\Psi_\rr}=\int_{\R}\ell_j(\tau,\lambda,\omega)d\tau+R_j(\lambda,\omega),\quad j=1,2,
\end{equation}
where the leading $\ell_j$, is given by
$$
\ell_j(\tau,\lambda,\omega):=\left(V_je^{-i\tau\lambda^{-1/2}H_0}\Phi_\rr, e^{-i\tau\lambda^{-1/2}H_0}\Psi_\rr\right),
$$
and the remainder term $R_j$, is given by
$$
R_j(\lambda,\omega)=\int_\R \left((W^j_--I)e^{-i\tau\lambda^{-1/2}H_0}\Phi_\rr, V_je^{-i\tau\lambda^{-1/2}H_0}\Psi_\rr\right)\,d\tau.
$$ 
At first, we estimate the remainder term $R_j$. Let $a\in\mathcal{C}_0^\infty(B(0,1))$ such that $a(\xi)\hat{\Phi}(\xi)=\hat{\Phi}(\xi)$, and $a(\xi)\hat{\Psi}(\xi)=\hat{\Psi}(\xi)$ Lemma \ref{L2.4} gives uniformly in $\lambda$ the integral bound
\begin{multline}\label{3.8}
\norm{V_je^{-i\tau\lambda^{-1/2}H_0}\Phi_\rr}=\norm{V_je^{-i\tau\lambda^{-1/2}H_0}a_{-\rr}(D)\Phi_\rr}\cr
=\norm{V_j(x+\tau\omega)e^{-i\tau\lambda^{-1/2}H_0}a(D)\Phi}\leq C\seq{\tau}^{-\delta}\norm{\Phi}_{L_\delta^2(\R^n)}.
\end{multline}
Similarly, we get
\begin{equation}\label{3.9}
\norm{V_je^{-i\tau\lambda^{-1/2}H_0}\Psi_\rr}\leq C\seq{\tau}^{-\delta}\norm{\Psi}_{L_\delta^2(\R^n)}.
\end{equation}
By Lemma \ref{L2.5} and \eqref{3.9}, we obtain
\begin{multline*}
\abs{R_j(\lambda,\omega)}\leq C\int_\R \norm{(W^j_--I)e^{-i\tau\lambda^{-1/2}H_0}\Phi_\rr}\norm{V_je^{-i\tau\lambda^{-1/2}H_0}\Psi_\rr}\,d\tau
\leq \frac{C}{\sqrt{\lambda}} \norm{\Phi}_{L_\delta^2(\R^n)}\norm{\Psi}_{L_\delta^2(\R^n)}.
\end{multline*}
Thus $R_j(\lambda,\omega)$ satisfies the remainder estimate in \eqref{3.3}.\\
We consider now the leading term $\ell_j$. Taking into account \eqref{2.15}, \eqref{2.16} a simple calculation gives
$$
\ell_j(\tau,\lambda,\omega)=\para{V_j(x+\tau\omega)e^{-i\tau\lambda^{-1/2}H_0}\Phi,e^{-i\tau\lambda^{-1/2}H_0}\Psi},
$$
and therefore
$$
\ell_j(\tau,\lambda,\omega)-\para{V_j(x+\tau\omega)\Phi,\Psi}=\ell^{(1)}_j(\tau,\lambda,\omega)+\ell^{(2)}_j(\tau,\lambda,\omega)
$$
where
$$
\ell^{(1)}_j(\tau,\lambda,\omega)= \para{V_j(x+\tau\omega)e^{-i\tau\lambda^{-1/2}H_0}\Phi,(e^{-i\tau\lambda^{-1/2}H_0}-I)\Psi},
$$
and 
$$
\ell^{(2)}_j(\tau,\lambda,\omega)= \para{(e^{-i\tau\lambda^{-1/2}H_0}-I)\Phi,V_j(x+\tau\omega)\Psi}.
$$
Since $\hat{\Psi}$ has compact support, we obtain
$$
\left(e^{-i\tau\lambda^{-1/2}H_0}-I\right)\Psi=\int_0^{\tau/\sqrt{\lambda}}\frac{d}{ds}(e^{-isH_0}\Psi)\,ds=-i\int_0^{\tau/\sqrt{\lambda}}e^{-isH_0}H_0\Psi\, ds.
$$
Therefore, we have
$$
\norm{ (e^{-i\tau\lambda^{-1/2}H_0}-I)\Psi}_{\LL}\leq \frac{\abs{\tau}}{\sqrt{\lambda}}\norm{H_0\Psi}_{\LL}\leq  \frac{\abs{\tau}}{\sqrt{\lambda}}\norm{\Psi}_{H^2(\R^n)},
$$
and using the fact that
$$
\norm{ (e^{-i\tau\lambda^{-1/2}H_0}-I)\Psi}_{\LL}\leq 2\norm{\Psi}_{\LL},
$$
we deduce the following estimation
\begin{equation}\label{3.17}
\norm{ (e^{-i\tau\lambda^{-1/2}H_0}-I)\Psi}_{\LL} \leq  C\para{\frac{\abs{\tau}}{\sqrt{\lambda}}}^{\gamma}\norm{\Psi}_{H^2(\R^n)},
\end{equation}
for all $\gamma\in (0,1)$. Then by \eqref{3.17} and \eqref{3.8}, we find
$$
\abs{\ell^{(1)}_j(\tau,\lambda,\omega)}\leq \frac{C}{\lambda^{\gamma/2}}\seq{\tau}^{-(\delta-\gamma)}\norm{\Psi}_{H^2(\R^n)}\norm{\Phi}_{L_\delta^2(\R^n)}.
$$
Hence, by selecting $\gamma$ small such that $\delta-\gamma>1$, it follow that 
\begin{equation}\label{3.19}
\int_\R\abs{\ell^{(1)}_j(\tau,\lambda,\omega)}\,d\tau\leq \frac{C}{\lambda^{\gamma/2}}\norm{\Psi}_{H^2(\R^n)}\norm{\Phi}_{L_\delta^2(\R^n)}.
\end{equation}
Moreover, we have
\begin{multline*}
\int_{\R^n}\abs{V_j(x+t\omega)\Psi(x)}^2dx\leq C\int_{\set{\abs{x+\tau\omega}>\frac{1}{2}\abs{\tau}}}\seq{x+\tau\omega}^{-2\delta}\abs{\Psi(x)}^2dx\cr
+\int_{\set{\abs{x+\tau\omega}\leq \frac{1}{2}\abs{\tau}}}\seq{x+\tau\omega}^{-2\delta}\abs{\Psi(x)}^2dx\cr
\leq C\para{\seq{\tau}^{-2\delta}\int_{\R^n}\abs{\Psi(x)}^2dx
+\seq{\tau}^{-2\delta}\int_{\R^n}\seq{x}^{2\delta}\abs{\Psi(x)}^2dx }\leq C\seq{\tau}^{-2\delta}\norm{\Psi}^2_{L_\delta^2(\R^n)}.
\end{multline*}
Then we show as the proof of \eqref{3.19} that
\begin{eqnarray*}
\int_\R\abs{\ell^{(2)}_j(\tau,\lambda,\omega)} d\tau &\leq & \int_\R \norm{(e^{-i\tau\lambda^{-1/2}H_0}-I)\Phi}_{\LL}\norm{V_j(x+\tau\omega)\Psi}_{\LL}d\tau\cr
&\leq & \frac{C}{\lambda^{\gamma/2}}\para{\int_\R\seq{\tau}^{-(\delta-\gamma)}d\tau}\norm{\Phi}_{H^2(\R^n)}\norm{\Psi}_{L_\delta^2(\R^n)}.
\end{eqnarray*}
Then, we easily see that
\begin{equation}\label{3.22}
\int_\R\abs{\ell^{(1)}_j(\tau,\lambda,\omega)} d\tau+\int_\R\abs{\ell^{(2)}_j(\tau,\lambda,\omega)} d\tau \leq \frac{C}{\lambda^{\gamma/2}}\para{\norm{\Psi}_{H^2(\R^n)}\norm{\Phi}_{L_\delta^2(\R^n)}+\norm{\Phi}_{H^2(\R^n)}\norm{\Psi}_{L_\delta^2(\R^n)} }.
\end{equation}
From \eqref{3.22} and \eqref{3.5}, we deduce that
$$
i\sqrt{\lambda}\para{(S_{V_1}-S_{V_2})\Phi_\rr,\Psi_\rr}=\int_\R(\ell_1-\ell_2)(\tau,\lambda,\omega)d\tau+(R_1-R_2)(\lambda,\omega):=\int_\R\ell(\tau,\lambda,\omega)d\tau+R(\lambda,\omega) ,
$$
where the leading and remainder terms, respectively, satisfy
\begin{multline*}
\abs{\int_\R\para{\ell(\tau,\lambda,\omega)-(V(x+\tau\omega)\Phi,\Psi)} d\tau}\leq \sum_{j=1}^2\int_\R\para{\abs{\ell^{(1)}_j(\tau,\lambda,\omega)}+\abs{\ell^{(2)}_j(\tau,\lambda,\omega)} } d\tau\cr
\leq \frac{C}{\lambda^{\gamma/2}}\para{\norm{\Psi}_{H^2(\R^n)}\norm{\Phi}_{L_\delta^2(\R^n)}+\norm{\Phi}_{H^2(\R^n)}\norm{\Psi}_{L_\delta^2(\R^n)} },
\end{multline*}
and
$$
\abs{R(\lambda,\omega)}\leq \abs{R_1(\lambda,\omega)}+\abs{R_2(\lambda,\omega)}\leq \frac{C}{\sqrt{\lambda}}\norm{\Phi}_{L_\delta^2(\R^n)}\norm{\Psi}_{L_\delta^2(\R^n)}.
$$
This completes the proof of Lemma \ref{L3.1}.
\end{proof}
\section{Proof of the stability estimate}
\setcounter{equation}{0}
In this section, we complete the proof of Theorem \ref{T.1}. We are going to use the estimate proved in the previous section; this will provide information on the $X$-ray transform of the difference of short range electric potentials.
\medskip

Let $\omega\in \s^{n-1}$ and $V\in\V$. We denote
$$
f(x)=X(V)(x,\omega)=\int_\R V(x+t\omega)dt.
$$
Then $f$ satisfies the following estimate
\begin{eqnarray*}
\abs{f(x)}=\abs{f(x-(\omega\cdot x)\omega)} &\leq&  C \int_\R\seq{x-(\omega\cdot x)\omega+t\omega}^{-\delta} dt \cr
&\leq & \frac{C}{\seq{x-(x\cdot\omega)\omega}^{\delta-1}}\int_\R\seq{t}^{-\delta}dt,\quad \forall x\in\R^n.
\end{eqnarray*}
In particularly, we have $f\in L^1(\omega^\bot)$.
\smallskip

For any $\Phi$, $\Psi$ with $\textrm{Supp}(\hat{\Phi})\subset B(0,1)$ and  $\textrm{Supp}(\hat{\Psi})\subset B(0,1)$, we have by \eqref{3.3}
\begin{multline*}
\abs{\int_{\R^n} f(x)\Phi(x)\overline{\Psi}(x)dx}\leq \sqrt{\lambda}\norm{S_{V_1}-S_{V_2}}\norm{\Phi}_{\LL}\norm{\Psi}_{\LL}\cr
+C\lambda^{-\gamma/2}\para{\norm{\Phi}_{H^2(\R^n)}+\norm{\Phi}_{L^2_\delta(\R^n)}}\para{\norm{\Psi}_{H^2(\R^n)}+\norm{\Psi}_{L^2_\delta(\R^n)}}.
\end{multline*}
Let $\eta\in\omega^\bot$ be fixed and let $\Psi\in \LL$ such that  $\textrm{Supp}(\hat{\Psi})\subset B(\eta/2,1)$. Denote by $\Psi_{\eta/2}=e^{ix\cdot\eta/2}\Psi$, then  $\textrm{Supp}(\hat{\Psi}_{\eta/2})\subset B(0,1)$. Applying the last inequality with $\Psi=\Psi_{\eta/2}$, we find
\begin{multline}\label{4.4}
\abs{\int_{\R^n} f_{-\eta/2}(x)\Phi(x)\overline{\Psi}(x)dx}\leq \sqrt{\lambda}\norm{S_{V_1}-S_{V_2}}\norm{\Phi}_{\LL}\norm{\Psi}_{\LL}\cr
+C\seq{\eta}^2\lambda^{-\gamma/2}\para{\norm{\Phi}_{H^2(\R^n)}+\norm{\Phi}_{L^2_\delta(\R^n)}}\para{\norm{\Psi}_{H^2(\R^n)}+\norm{\Psi}_{L^2_\delta(\R^n)}},
\end{multline}
where $f_{-\eta/2}=e^{-ix\cdot\eta/2}f$.
\begin{lemma}\label{L4.1}
Let $V_j\in\V$, $j=1,2$. Then there exist $C>0$, $\lambda_0>0$, $\gamma\in (0,1)$ and $\sigma>n/2+\gamma$ and  such that for any $\omega\in\s^{n-1}$ and $\Phi\in\mathcal{S}(\R^n)$ such that $\textrm{Supp}(\hat{\Phi})\subset B(0,1)$, the following estimate holds true
\begin{multline*}
\abs{\F(f\Phi)(\eta)}\leq \varepsilon^{-n/2}\sqrt{\lambda}\norm{S_{V_1}-S_{V_2}}\norm{\Phi}_{\LL}\cr
+C\seq{\eta}^4\lambda^{-\gamma/2}\varepsilon^{-n/2-\delta}\para{\norm{\Phi}_{H^2(\R^n)}+\norm{\Phi}_{L^2_\delta(\R^n)}}+C\varepsilon^\gamma\norm{\Phi}_{L^2_\sigma(\R^n)}
\end{multline*}
for any $\lambda>\lambda_0$, $\eta\in\omega^\bot$ and $\varepsilon\in (0,1)$.
\end{lemma}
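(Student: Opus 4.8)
The plan is to realize $\F(f\Phi)(\eta)$ as one of the bilinear pairings already controlled by \eqref{4.4}, by choosing the second slot $\Psi$ to be a modulated, rescaled Schwartz bump that concentrates onto the plane wave $e^{ix\cdot\eta/2}$. Fix a real, even $g\in\mathcal{S}(\R^n)$ with $\hat{g}$ supported in $B(0,1)$ and normalized so that $g(0)=1$, and for $\varepsilon\in(0,1)$ set $g_\varepsilon(x)=g(\varepsilon x)$, so that $\hat{g}_\varepsilon$ is supported in $B(0,\varepsilon)$. Taking $\Psi=(2\pi)^{-n/2}e^{ix\cdot\eta/2}g_\varepsilon$, one checks that $\textrm{Supp}(\hat{\Psi})\subset B(\eta/2,1)$, so $\Psi$ is admissible in \eqref{4.4}, and since $g_\varepsilon$ is real the pairing collapses to a windowed Fourier transform,
$$
\int_{\R^n}f_{-\eta/2}(x)\Phi(x)\overline{\Psi}(x)\,dx=\F(f\Phi\,g_\varepsilon)(\eta).
$$

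First I would insert this $\Psi$ into \eqref{4.4} and track the $\varepsilon$–dependence of its norms. A direct scaling computation gives $\norm{\Psi}_{\LL}\leq C\varepsilon^{-n/2}$ and $\norm{\Psi}_{L^2_\delta(\R^n)}\leq C\varepsilon^{-n/2-\delta}$, while the fact that $\hat{\Psi}$ lives in $B(\eta/2,\varepsilon)$ yields $\norm{\Psi}_{H^2(\R^n)}\leq C\seq{\eta}^2\varepsilon^{-n/2}$. Hence $\norm{\Psi}_{H^2(\R^n)}+\norm{\Psi}_{L^2_\delta(\R^n)}\leq C\seq{\eta}^2\varepsilon^{-n/2-\delta}$, and \eqref{4.4} directly produces the first two terms of the claim: the $\varepsilon^{-n/2}$ comes from $\norm{\Psi}_{\LL}$, and the extra $\seq{\eta}^2$ from $\norm{\Psi}$ combines with the prefactor $\seq{\eta}^2$ in \eqref{4.4} to give $\seq{\eta}^4$.

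It remains to pass from $\F(f\Phi\,g_\varepsilon)(\eta)$ to $\F(f\Phi)(\eta)$, that is, to control
$$
\F(f\Phi)(\eta)-\F(f\Phi\,g_\varepsilon)(\eta)=(2\pi)^{-n/2}\int_{\R^n}e^{-ix\cdot\eta}f(x)\Phi(x)\bigl(1-g_\varepsilon(x)\bigr)\,dx.
$$
Since $g(0)=1$ and $g\in\mathcal{S}(\R^n)$, one has $\abs{1-g_\varepsilon(x)}\leq C(\varepsilon\abs{x})^\gamma\leq C\varepsilon^\gamma\seq{x}^\gamma$ for every $\gamma\in(0,1)$. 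This is the delicate step, and it is precisely where the hypothesis $\sigma>n/2+\gamma$ is forced: $f=X(V)(\cdot,\omega)$ is bounded but does \emph{not} decay in the direction $\omega$, so one cannot use an $L^2$ bound on $f$. Instead I would use the pointwise estimate established just before the lemma, which gives $\norm{f}_{L^\infty(\R^n)}\leq C$ uniformly in $\omega$ (because $\delta>1$ makes $\int_\R\seq{t}^{-\delta}\,dt$ finite and $\seq{x-(x\cdot\omega)\omega}^{-(\delta-1)}\leq 1$), and then Cauchy--Schwarz against the weight $\seq{x}^{\gamma-\sigma}$:
$$
\abs{\F(f\Phi)(\eta)-\F(f\Phi\,g_\varepsilon)(\eta)}\leq C\varepsilon^\gamma\norm{f}_{L^\infty(\R^n)}\norm{\Phi}_{L^2_\sigma(\R^n)}\para{\int_{\R^n}\seq{x}^{2(\gamma-\sigma)}\,dx}^{1/2}\leq C\varepsilon^\gamma\norm{\Phi}_{L^2_\sigma(\R^n)},
$$
where the last integral is finite exactly because $2(\sigma-\gamma)>n$. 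Adding this error term to the bound from the previous paragraph gives the asserted inequality. The only remaining point requiring care is that every constant be uniform in $\omega$ and $\eta$, which follows from the uniform $L^\infty$ bound on $f$ and the explicit location of $\textrm{Supp}(\hat{\Psi})$ used in the $H^2$ estimate.
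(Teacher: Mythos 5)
Your proposal is correct and takes essentially the same route as the paper: the paper's test function $\Psi_\varepsilon=\F^{-1}(\psi_\varepsilon)$, $\psi_\varepsilon(\xi)=\varepsilon^{-n}\psi_0(\varepsilon^{-1}(\xi-\eta/2))$, is exactly your modulated dilated bump $e^{ix\cdot\eta/2}g(\varepsilon x)$ written on the Fourier side, it is inserted into the same inequality \eqref{4.4}, and it yields the same $\varepsilon^{-n/2}$ and $\seq{\eta}^4\varepsilon^{-n/2-\delta}$ scalings. The only difference is dual phrasing of the mollification error: the paper uses H\"older continuity of $\F(f_{-\eta/2}\Phi)$ in the frequency variable, while you use $\abs{1-g_\varepsilon(x)}\leq C(\varepsilon\abs{x})^{\gamma}$ in physical space; both rest on the uniform boundedness of $f$ and Cauchy--Schwarz with the weight $\seq{x}^{\gamma-\sigma}$, which is where $\sigma>n/2+\gamma$ enters.
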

\begin{proof}
Let $\psi_0\in\mathcal{C}_0^\infty(B(0,1))$, with $\norm{\psi_0}_{L^1(\R^n)}=1$, we define
$$
\psi_\varepsilon(\xi)=\varepsilon^{-n}\psi_0(\varepsilon^{-1}(\xi-\eta/2)),\quad \textrm{Supp}(\psi_\varepsilon)\subset B(\eta/2,\varepsilon)\subset B(\eta/2,1),
$$
and let $\Psi_\varepsilon=\F^{-1}(\psi_\varepsilon)$. By Plancherel formula, we get
\begin{equation}\label{4.6}
\int_{\R^n} f_{-\eta/2}(x)\Phi(x)\overline{\Psi}_\varepsilon (x)dx=\int_{\R^n}\F(f_{-\eta/2}\Phi)(\xi)\psi_\varepsilon(\xi)d\xi.
\end{equation}
Taking into account \eqref{4.6} and applying \eqref{4.4} with $\Psi=\Psi_\varepsilon$, we obtain
\begin{multline}\label{4.7}
\abs{\int_{\R^n} \F(f_{-\eta/2}\Phi)(\xi)\psi_\varepsilon(\xi)d\xi}\leq \sqrt{\lambda}\norm{S_{V_1}-S_{V_2}}\norm{\Phi}_{\LL}\norm{\psi_\varepsilon}_{\LL}\cr
+C\seq{\eta}^2\lambda^{-\gamma/2}\para{\norm{\Phi}_{H^2(\R^n)}+\norm{\Phi}_{L^2_\delta(\R^n)}}\para{\norm{\Psi_\varepsilon}_{H^2(\R^n)}+\norm{\Psi_\varepsilon}_{L^2_\delta(\R^n)}}.
\end{multline}
Furthermore there exists $C>0$ such that 
\begin{equation}\label{4.8}
\norm{\psi_\varepsilon}_{\LL}^2=\varepsilon^{-n}\int_{\R^n}\abs{\psi_0(\xi)}^2 d\xi\leq C\varepsilon^{-n},
\end{equation}
and
\begin{equation}\label{4.9}
\norm{\Psi_\varepsilon}^2_{H^2(\R^n)}=\int_{\R^n}\seq{\xi}^4\abs{\psi_\varepsilon(\xi)}^2d\xi=\varepsilon^{-2n}\int_{\R^n}\seq{\xi}^4\abs{\psi_0(\varepsilon^{-1}(\xi-\eta/2))}^2d\xi\leq C\varepsilon^{-n}\seq{\eta}^4.
\end{equation}
Using the fact that
$$
\F(\psi_\varepsilon)(y)=e^{-iy\cdot\eta/2}\hat{\psi}_0(\varepsilon y)
$$
and \eqref{2.3.1}, we get
\begin{equation}\label{4.10}
\norm{\Psi_\varepsilon}_{L^2_\delta(\R^n)}=\norm{\psi_\varepsilon}_{H^\delta(\R^n)}\leq C\varepsilon^{-n/2-\delta}.
\end{equation} 
Then, by \eqref{4.7}, \eqref{4.8}, \eqref{4.9} and \eqref{4.10}, one gets
\begin{multline*}
\abs{\int_{\R^n} \F(f_{-\eta/2}\Phi)(\xi)\psi_\varepsilon(\xi)d\xi}\leq \sqrt{\lambda}\varepsilon^{-n/2}\norm{S_{V_1}-S_{V_2}}\norm{\Phi}_{\LL}\cr
+C\seq{\eta}^4\lambda^{-\gamma/2}\varepsilon^{-n/2-\delta}\para{\norm{\Phi}_{H^2(\R^n)}+\norm{\Phi}_{L^2_\delta(\R^n)}}.
\end{multline*}
Moreover, we have
\begin{multline*}
\F(f_{-\eta/2}\Phi)(\eta/2)=\int_{\R^n}\F(f_{-\eta/2}\Phi)(\xi)\psi_\varepsilon(\xi)d\xi\cr
+\int_{\R^n}\para{\F(f_{-\eta/2}\Phi)(\eta/2)-\F(f_{-\eta/2}\Phi)(\xi) }\psi_\varepsilon(\xi)d\xi.
\end{multline*}
Furthermore, for any $\gamma\in (0,1)$, there exists $C=C(\gamma)>0$ such that
\begin{eqnarray*}
\abs{\F(f_{-\eta/2}\Phi)(\eta/2)-\F(f_{-\eta/2}\Phi)(\xi) } &\leq & C\abs{\xi-\eta/2}^\gamma\int_{\R^n}\seq{x}^\gamma\abs{\Phi(x)} dx\cr
&\leq & C\abs{\xi-\eta/2}^\gamma\para{\int_{\R^n}\seq{x}^{-2\sigma+2\gamma}dx}^{1/2}\norm{\Phi}_{L^2_\sigma(\R^n)},
\end{eqnarray*}
for some $\sigma>\gamma+n/2$. We deduce that

\begin{multline*}
\abs{\int_{\R^n}\para{\F(f_{-\eta/2}\Phi)(\eta/2)-\F(f_{-\eta/2}\Phi)(\xi) }\psi_\varepsilon(\xi)d\xi}\cr
\leq C \norm{\Phi}_{L^2_\sigma(\R^n)}\int_{\R^n}\abs{\eta/2-\xi}^\gamma\abs{\psi_\varepsilon(\xi)}d\xi\leq C\varepsilon^\gamma \norm{\Phi}_{L^2_\sigma(\R^n)},
\end{multline*}
which imply 
\begin{multline*}
\abs{\F(f\Phi)(\eta)}=\abs{ \F(f_{-\eta/2}\Phi)(\eta/2)}\leq \varepsilon^{-n/2} \sqrt{\lambda}\norm{S_{V_1}-S_{V_2}}\norm{\Phi}_{\LL}\cr
+\seq{\eta}^4\lambda^{-\gamma/2}\varepsilon^{-n/2-\delta}\para{\norm{\Phi}_{H^2(\R^n)}+\norm{\Phi}_{L^2_\delta(\R^n)}}+C\varepsilon^\gamma\norm{\Phi}_{L^2_\sigma (\R^n)}.
\end{multline*}
This completes the proof of the Lemma.
\end{proof}
We give now the following Lemma to be used later
\begin{lemma}\label{L4.2}
Let $\theta\in\mathcal{C}_0^\infty((-\frac{1}{2},\frac{1}{2}))$ and $\varphi\in\mathcal{C}_0^\infty(\omega^\bot\cap B(0,\frac{1}{2}))$. Putting
$$
\Phi(y)=\F_0^{-1}(\theta)(y\cdot\omega)\F_{\omega^\bot}^{-1}(\varphi)(y-(y\cdot\omega)\omega),\quad y\in\R^n,
$$
where $\F_0$ denote the Fourier transform on function in $\R$. Then we have $\textrm{Supp}(\hat{\Phi})\subset B(0,1)$ and 
$$
\hat{\Phi}(\xi)=\theta(\omega\cdot\xi)\varphi(\xi-(\omega\cdot\xi)\omega),\quad \forall \xi\in\R^n.
$$
Moreover,  for all $s\geq 0$, we have 
$$
\norm{\Phi}_{H^s(\R^n)}\leq \norm{\theta}_{L^2_s(\R)}\norm{\varphi}_{L^2_s(\omega^\bot)}.
$$
Finally, for any $\delta\geq 0$, there exists $C>0$ such that 
$$
\norm{\Phi}_{L^2_\delta(\R^n)}\leq C\norm{\varphi}_{H^\delta(\omega^\bot)}.
$$
 Here $C$ depends on norms of $\theta$.
\end{lemma}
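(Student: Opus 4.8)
The plan is to exploit the fact that, in the orthogonal splitting $\R^n=\R\omega\oplus\omega^\bot$, the function $\Phi$ is a tensor product. Writing $y=t\omega+y'$ with $t=y\cdot\omega\in\R$ and $y'=y-(y\cdot\omega)\omega\in\omega^\bot$, the defining formula reads $\Phi(y)=g(t)\,h(y')$ where $g=\F_0^{-1}(\theta)$ and $h=\F_{\omega^\bot}^{-1}(\varphi)$. Everything then reduces to separating a one-dimensional factor (governed by $\theta$) from an $(n-1)$-dimensional one (governed by $\varphi$).

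First I would compute $\hat\Phi$. Because the Lebesgue measure factorizes as $dy=dt\,dy'$ and, writing $\xi'=\xi-(\omega\cdot\xi)\omega$, the phase splits as $y\cdot\xi=t(\omega\cdot\xi)+y'\cdot\xi'$ (using $y'\cdot\omega=0$), the $n$-dimensional Fourier transform of a tensor product is the tensor product of the $1$-dimensional and $(n-1)$-dimensional transforms; the normalizations are compatible since $(2\pi)^{-1/2}(2\pi)^{-(n-1)/2}=(2\pi)^{-n/2}$. Hence $\hat\Phi(\xi)=\F_0(g)(\omega\cdot\xi)\,\F_{\omega^\bot}(h)(\xi')$, and since $\F_0\F_0^{-1}=I$ and $\F_{\omega^\bot}\F_{\omega^\bot}^{-1}=I$ this is exactly $\theta(\omega\cdot\xi)\varphi(\xi-(\omega\cdot\xi)\omega)$. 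The support claim follows at once: on $\textrm{Supp}(\hat\Phi)$ one has $\abs{\omega\cdot\xi}<\tfrac12$ and $\abs{\xi'}<\tfrac12$, so $\abs{\xi}^2=\abs{\omega\cdot\xi}^2+\abs{\xi'}^2<\tfrac12$, whence $\textrm{Supp}(\hat\Phi)\subset B(0,1)$.

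Both norm estimates then rest on the elementary weight inequality $(1+a+b)^s\leq(1+a)^s(1+b)^s$ for $a,b\geq0$, $s\geq0$, which holds because $1+a+b\leq(1+a)(1+b)$. For the Sobolev bound I would write $\norm{\Phi}_{H^s(\R^n)}^2=\int_{\R^n}\seq{\xi}^{2s}\abs{\hat\Phi(\xi)}^2\,d\xi$ and decompose the integral in the variables $(\omega\cdot\xi,\xi')$; the estimate $\seq{\xi}^{2s}=(1+\abs{\omega\cdot\xi}^2+\abs{\xi'}^2)^s\leq\seq{\omega\cdot\xi}^{2s}\seq{\xi'}^{2s}$ lets the integral separate into the product $\norm{\theta}_{L^2_s(\R)}^2\norm{\varphi}_{L^2_s(\omega^\bot)}^2$, giving the second assertion. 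For the weighted $L^2$ bound I would argue identically on the physical side, using $\seq{y}^{2\delta}\leq\seq{t}^{2\delta}\seq{y'}^{2\delta}$ to get $\norm{\Phi}_{L^2_\delta(\R^n)}^2\leq\norm{g}_{L^2_\delta(\R)}^2\norm{h}_{L^2_\delta(\omega^\bot)}^2$.

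Finally I would convert the two factors back using the unitarity identity \eqref{2.3.1} in dimension $1$ and in dimension $n-1$: $\norm{g}_{L^2_\delta(\R)}=\norm{\F_0^{-1}\theta}_{L^2_\delta(\R)}=\norm{\theta}_{H^\delta(\R)}$ and $\norm{h}_{L^2_\delta(\omega^\bot)}=\norm{\varphi}_{H^\delta(\omega^\bot)}$, which yields the last inequality with $C=\norm{\theta}_{H^\delta(\R)}$ depending only on $\theta$. The computations are routine and the only step that needs genuine care is the Fourier factorization under the orthogonal change of variables: keeping track of which transform ($\F_0$ or $\F_{\omega^\bot}$) acts on which variable is exactly what makes the support statement and the clean separation of the two weighted norms fall out.
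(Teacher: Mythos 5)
Your proof is correct: the tensor-product factorization $\Phi(y)=g(t)h(y')$ in the splitting $\R^n=\R\omega\oplus\omega^\bot$, the resulting identity $\hat\Phi(\xi)=\theta(\omega\cdot\xi)\varphi(\xi-(\omega\cdot\xi)\omega)$, the weight inequality $(1+a+b)^s\leq(1+a)^s(1+b)^s$, and the conversion via \eqref{2.3.1} giving $C=\norm{\theta}_{H^\delta(\R)}$ are all sound. The paper states Lemma \ref{L4.2} without proof, treating it as routine, and your argument is precisely the standard one it implicitly relies on.
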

The next step in the proof is to deduce an estimate that links the Fourier transform of the unknown coefficient to the measurement $S_{V_1}-S_{V_2}$.

\begin{lemma}\label{L4.3}
Let $V_j\in\V$, $j=1,2$. Then there exist $C>0$, $\lambda_0>0$, $\gamma\in (0,1)$ and $\alpha_j>0$, $j=1,2,3$, such that for any $\omega\in\s^{n-1}$ the following estimate holds true
\begin{equation}\label{4.15}
\abs{\F_{\omega^\bot}(f)(\eta)}\leq C\varepsilon^{-\alpha_1}\sqrt{\lambda}\norm{S_{V_1}-S_{V_2}}+\lambda^{-\gamma/2}\varepsilon^{-\alpha_2}\seq{\eta}^4+C\varepsilon^{\alpha_3},
\end{equation}
for any $\lambda>\lambda_0$, $\eta\in\omega^\bot$ and $\varepsilon\in (0,1)$.
\end{lemma}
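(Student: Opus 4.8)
The plan is to turn the bound on $\abs{\F(f\Phi)(\eta)}$ furnished by Lemma \ref{L4.1} into a bound on the genuine Fourier transform $\abs{\F_{\omega^\bot}(f)(\eta)}$ by feeding in a test function $\Phi$ of the product form supplied by Lemma \ref{L4.2}. Concretely, I would fix once and for all a cutoff $\theta\in\mathcal{C}_0^\infty((-1/2,1/2))$ with $\theta(0)\neq 0$, and take $\varphi=\varphi_\varepsilon$ to be an approximate identity on $\omega^\bot$ at scale $\varepsilon$, say $\varphi_\varepsilon(\zeta)=\varepsilon^{-(n-1)}\varphi_0(\zeta/\varepsilon)$ with $\varphi_0\in\mathcal{C}_0^\infty(\omega^\bot\cap B(0,1/2))$ and $\int_{\omega^\bot}\varphi_0=1$. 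For $\varepsilon\in(0,1)$ Lemma \ref{L4.2} then produces $\Phi$ with $\textrm{Supp}(\hat{\Phi})\subset B(0,1)$ whose norms are controlled by weighted norms of $\theta$ and $\varphi_\varepsilon$.

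The main structural point, and the step I expect to require the most care, is a factorization identity. Since $f=X(V)(\cdot,\omega)$ is constant along the $\omega$-direction, writing $x=s\omega+y$ with $s=x\cdot\omega$ and $y\in\omega^\bot$, and using that $\Phi(x)=\F_0^{-1}(\theta)(s)\,\F_{\omega^\bot}^{-1}(\varphi_\varepsilon)(y)$ factorizes, the full $n$-dimensional Fourier transform of $f\Phi$ splits as a tensor product over the $\omega$ and $\omega^\bot$ directions. Evaluating at $\eta\in\omega^\bot$, so that the $\omega$-component of the frequency vanishes and contributes $\F_0(\F_0^{-1}\theta)(0)=\theta(0)$, and then applying the convolution theorem on $\omega^\bot$, I obtain
\[
\F(f\Phi)(\eta)=c\,\theta(0)\,\para{\F_{\omega^\bot}(f)*\varphi_\varepsilon}(\eta),\qquad c=(2\pi)^{-(n-1)/2}.
\]

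Next I would remove the mollifier by estimating $\para{\F_{\omega^\bot}(f)*\varphi_\varepsilon}(\eta)-\F_{\omega^\bot}(f)(\eta)$ through the H\"older continuity of $\F_{\omega^\bot}(f)$. This is exactly where the hypothesis $(V_1-V_2)\in L^1_s(\R^n)$ enters: since $\seq{y}\le\seq{y+t\omega}$ for $y\in\omega^\bot$, one has $f\in L^1_s(\omega^\bot)$ with $\norm{f}_{L^1_s(\omega^\bot)}\le\norm{V}_{L^1_s(\R^n)}\le M$, and the elementary interpolation $\abs{e^{iy\cdot\zeta}-1}\le C\abs{y}^\gamma\abs{\zeta}^\gamma$ shows $\F_{\omega^\bot}(f)$ is H\"older-$\gamma$ continuous for every $\gamma\le s$. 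As $\varphi_\varepsilon$ is supported in $\set{\abs{\zeta}\le\varepsilon/2}$ and has unit mass, the convolution error is $O(\varepsilon^\gamma M)$, giving
\[
\abs{\F_{\omega^\bot}(f)(\eta)}\le \frac{(2\pi)^{(n-1)/2}}{\abs{\theta(0)}}\,\abs{\F(f\Phi)(\eta)}+C\varepsilon^\gamma .
\]

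Finally I would insert Lemma \ref{L4.1} to bound $\abs{\F(f\Phi)(\eta)}$ and use Lemma \ref{L4.2} to replace the norms of $\Phi$ by powers of $\varepsilon$: $\norm{\Phi}_{\LL},\,\norm{\Phi}_{H^2(\R^n)}\le C\varepsilon^{-(n-1)/2}$, $\norm{\Phi}_{L^2_\delta(\R^n)}\le C\varepsilon^{-(n-1)/2-\delta}$ and $\norm{\Phi}_{L^2_\sigma(\R^n)}\le C\varepsilon^{-(n-1)/2-\sigma}$. Here lies the delicate bookkeeping: the last term of Lemma \ref{L4.1}, of the form $C\varepsilon_1^{\gamma}\norm{\Phi}_{L^2_\sigma(\R^n)}$ with $\varepsilon_1$ the internal parameter of that lemma, grows like a negative power of $\varepsilon$ if one naively sets $\varepsilon_1=\varepsilon$, since $\sigma>n/2+\gamma$. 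To overcome this I would choose $\varepsilon_1=\varepsilon^{\beta}$ with $\beta$ large enough that $\varepsilon_1^{\gamma}\varepsilon^{-(n-1)/2-\sigma}\le\varepsilon^{\alpha_3}$ for some $\alpha_3>0$; this absorbs both that term and the convolution error of the previous step into a single $C\varepsilon^{\alpha_3}$, while the first two terms of Lemma \ref{L4.1} become $C\varepsilon^{-\alpha_1}\sqrt{\lambda}\norm{S_{V_1}-S_{V_2}}$ and $C\lambda^{-\gamma/2}\varepsilon^{-\alpha_2}\seq{\eta}^4$ with $\alpha_1,\alpha_2>0$ explicit in $\beta,n,\delta,\sigma$. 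Collecting the three contributions yields \eqref{4.15}. I expect the balancing of these $\varepsilon$-exponents, together with the rigorous justification of the tensor factorization, to be the only nonroutine parts.
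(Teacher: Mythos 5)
Your proposal is correct and follows essentially the same route as the paper: the tensor-product test function from Lemma \ref{L4.2}, the factorization $\F(f\Phi)(\eta)=c\,\theta(0)\,(\F_{\omega^\bot}(f)*\varphi)(\eta)$, the H\"older continuity of $\F_{\omega^\bot}(f)$ coming from $f\in L^1_{s}(\omega^\bot)$ to remove the mollifier, and a final balancing of the two small parameters. The only (immaterial) difference is bookkeeping: the paper keeps the internal parameter of Lemma \ref{L4.1} as the master variable $\varepsilon$ and chooses the mollifier scale $h$ as a power of it via $\varepsilon^\gamma h^{-\sigma+(1-n)/2}=h^{\gamma'}$, whereas you make the mollifier scale the master variable and set the internal parameter equal to $\varepsilon^\beta$ with $\beta$ large; the two choices are equivalent up to relabeling the exponents $\alpha_j$.
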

\begin{proof}
Let $\theta\in\mathcal{C}_0^\infty(-1/4,1/4)$ and $\varphi\in\mathcal{C}_0^\infty(\omega^\bot\cap B(0,1/2))$. Putting
$$
\Phi(y)=\F_0^{-1}(\theta)(y\cdot\omega)\F^{-1}_{\omega^\bot}(\varphi)(y-(y\cdot\omega)\omega),\quad y\in\R^n.
$$
We assume further $\theta(0)=1$. Then we have by Lemma \ref{L4.2} $\textrm{Supp}(\hat{\Phi})\subset B(0,1)$. The change of variable $x=y+t\omega\in\omega^\bot\oplus\R\omega$, $dx=dydt$ yields, after noting that $\eta\in\omega^\bot$
\begin{eqnarray}\label{4.16}
\F(f\Phi)(\eta)&=&(2\pi)^{-n/2}\int_{\R^n}e^{-ix\cdot\eta}\F_0^{-1}\theta(x\cdot\omega)\F^{-1}_{\omega^\bot}(\varphi)(x-(x\cdot\omega)\omega)f(x)dx\cr
&=&(2\pi)^{-n/2}\int_{\R}\int_{\omega^\bot}e^{-iy\cdot\eta}\F^{-1}_0(\theta)(t)\F^{-1}_{\omega^\bot}(\varphi)(y)f(y)dy\,dt\cr
&=&(2\pi)^{-(n-1)/2}\int_{\omega^\bot}e^{-iy\cdot\eta}\F^{-1}_{\omega^\bot}(\varphi)(y)f(y)dy\cr
&=&\F_{\omega^\bot}(f\F_{\omega^\bot}^{-1}(\varphi))(\eta)=\F_{\omega^\bot}(f)*\varphi(\eta),
\end{eqnarray}
where we have used $f(y-t\omega)=f(y)$ for any $t\in\R$. Taking account \eqref{4.16} and applying Lemma \ref{L4.2}, one gets
\begin{multline}\label{4.19}
\abs{\int_{\omega^\bot}\F_{\omega^\bot}(f)(\xi)\varphi(\eta-\xi)d\xi}\leq \varepsilon^{-n/2}\sqrt{\lambda}\norm{S_{V_1}-S_{V_2}}\norm{\varphi}_{L^2(\omega^\bot)}\cr
+C\seq{\eta}^4\lambda^{-\gamma/2}\varepsilon^{-n/2-\delta}\para{\norm{\varphi}_{L_2^2(\omega^\bot)}+\norm{\varphi}_{H^\delta(\omega^\bot)}}+C\varepsilon^\gamma\norm{\varphi}_{H^\sigma(\omega^\bot)}.
\end{multline}
Now, we specify the choice of the function $\varphi$. Let $\varphi_0\in\mathcal{C}_0^\infty(\omega^\bot\cap B(0,1/2))$ with 
$\norm{\varphi_0}_{L^1(\omega^\bot)}=1$, we define, for $h$ small
$$
\varphi_h(\xi)=h^{-n+1}\varphi_0(h^{-1}\xi),\quad \xi\in\omega^\bot.
$$
Applying \eqref{4.19} with $\varphi=\varphi_h$, we get
\begin{multline*}
\abs{\int_{\omega^\bot}\F_{\omega^\bot}(f)(\xi)\varphi_h(\eta-\xi)d\xi}\leq \varepsilon^{-n/2}\sqrt{\lambda}\norm{S_{V_1}-S_{V_2}}\norm{\varphi_h}_{L^2(\omega^\bot)}\cr
+C\seq{\eta}^4\lambda^{-\gamma/2}\varepsilon^{-n/2-\delta}\para{\norm{\varphi_h}_{L_2^2(\omega^\bot)}+\norm{\varphi_h}_{H^\delta(\omega^\bot)}}+C\varepsilon^\gamma\norm{\varphi_h}_{H^\sigma(\omega^\bot)}.
\end{multline*}
Since
$$
\norm{\varphi_h}_{L^2(\omega^\bot)}=h^{(1-n)/2}\norm{\varphi_0}_{L^2(\omega^\bot)},\quad \norm{\varphi_h}_{L_2^2(\omega^\bot)}\leq C h^{(1-n)/2}\norm{\varphi_0}_{L_2^2(\omega^\bot)}
$$
and, 
$$
\norm{\varphi_h}_{H^\sigma(\omega^\bot)}\leq C h^{-\sigma+(1-n)/2}\norm{\varphi_0}_{H^\sigma(\omega^\bot)},
$$
we obtain
\begin{multline*}
\abs{\int_{\omega^\bot}\F_{\omega^\bot}(f)(\xi)\varphi_h(\eta-\xi)d\xi}\leq \varepsilon^{-n/2}\sqrt{\lambda}h^{(1-n)/2}\norm{S_{V_1}-S_{V_2}}\cr
+C\seq{\eta}^4\lambda^{-\gamma/2}\varepsilon^{-n/2-\delta} h^{-\delta+(1-n)/2}+C\varepsilon^\gamma h^{-\sigma+(1-n)/2}.
\end{multline*}
Moreover
$$
\F_{\omega^\bot}(f)(\eta)=\int_{\omega^\bot}\F_{\omega^\bot}(f)(\xi)\varphi_h(\eta-\xi)d\xi-\int_{\omega^\bot}(\F_{\omega^\bot}(f)(\xi)-\F_{\omega^\bot}(f)(\eta))\varphi_h(\eta-\xi)d\xi.
$$
Using the fact that,
\begin{eqnarray*}
\abs{\F_{\omega^\bot}(f)(\xi)-\F_{\omega^\bot}(f)(\eta)} &\leq & C\int_{\omega^\bot} \abs{e^{-ix\cdot\xi}-e^{-ix\cdot\eta}}\abs{f(x)}dx\cr
&\leq & C\abs{\xi-\eta}^{\gamma'}\int_{\omega^\bot}\seq{x}^{\gamma'}\abs{f(x)}dx\cr
&\leq & C\abs{\xi-\eta}^{\gamma'}\norm{V}_{L^1_{\gamma'}(\R^n)},
\end{eqnarray*}
with $\gamma'>0$ sufficiently small. We deduce that
\begin{eqnarray*}
\abs{\int_{\omega^\bot}(\F_{\omega^\bot}(f)(\xi)-\F_{\omega^\bot}(f)(\eta))\varphi_h(\eta-\xi)d\xi} &\leq & CM\int_{\omega^\bot}\abs{\xi-\eta}^{\gamma'}\abs{\varphi_h(\eta-\xi)}d\xi\cr
&\leq & CMh^{\gamma'}.
\end{eqnarray*}
We obtain, for any $\eta\in\omega^\bot$
\begin{eqnarray*}
\abs{\F_{\omega^\bot}(f)(\eta)} 
&\leq & \varepsilon^{-n/2}\sqrt{\lambda}h^{(1-n)/2}\norm{S_{V_1}-S_{V_2}}+\lambda^{-\gamma/2}\varepsilon^{-n/2-\delta}\seq{\eta}^4 h^{-\delta+(1-n)/2}\cr
&&+C\varepsilon^\gamma h^{-\sigma+(1-n)/2}+Ch^{\gamma'}.
\end{eqnarray*}
Selecting $h$ such that $\varepsilon^\gamma h^{-\sigma+(1-n)/2}=h^{\gamma'}$, we obtain
$$
\abs{\F_{\omega^\bot}(f)(\eta)} \leq \varepsilon^{-\alpha_1}\sqrt{\lambda}\norm{S_{V_1}-S_{V_2}}+\lambda^{-\gamma/2}\varepsilon^{-\alpha_2}\seq{\eta}^4+C\varepsilon^{\alpha_3}.
$$
This completes the proof of the Lemma.
\end{proof}
We return now to the proof of Theorem \ref{T.1}. Since $\omega$ is arbitrary, we deduce from \eqref{4.15} and \eqref{3.2}
\begin{equation}\label{4.24}
\abs{\F(V)(\eta)}\leq  \varepsilon^{-\alpha_1}\sqrt{\lambda}\norm{S_{V_1}-S_{V_2}}+\lambda^{-\gamma/2}\varepsilon^{-\alpha_2}\seq{\eta}^4 +C\varepsilon^{\alpha_3},\quad \forall\eta\in\R^n.
\end{equation}
In light of the above reasoning and decomposing the $H^{-1}(\R^n)$ norm of $V$ as
$$
\norm{V}^2_{H^{-1}(\R^n)}=\int_{\abs{\eta}\leq R} \seq{\eta}^{-2}\abs{\F(V)(\eta)}^2d\eta+ \int_{\abs{\eta}> R} \seq{\eta}^{-2}\abs{\F(V)(\eta)}^2d\eta
$$
then, by \eqref{4.24}, we get
$$
\norm{V}^2_{H^{-1}(\R^n)}\leq C\para{R^n(\varepsilon^{-\alpha_1}\sqrt{\lambda}\norm{S_{V_1}-S_{V_2}}+\lambda^{-\gamma/2}\varepsilon^{-\alpha_2}R^2 +C\varepsilon^{\alpha_3})+\frac{M^2}{R^2} }.
$$
The next step is to choose in such away $\varepsilon^{\alpha_3} R^n=R^{-2}$. In this case we get
$$
\norm{V}^2_{H^{-1}(\R^n)}\leq C\para{R^{\beta_1}\sqrt{\lambda}\norm{S_{V_1}-S_{V_2}}+\lambda^{-\gamma/2}R^{\beta_2}+\frac{1}{R^2} }.
$$
Now we choose $R>0$ in such that away $\lambda^{-\gamma/2}R^{\beta_2}=R^{-2}$. In this case we get
$$
\norm{V}^2_{H^{-1}(\R^n)}\leq C\para{\lambda^{\mu_1}\norm{S_{V_1}-S_{V_2}}+\lambda^{-\mu_2}},
$$
for some positive constants $\mu_1,\mu_2$. Finally, minimizing the right hand side with respect to $\lambda$ we obtain the desired estimate of Theorem \ref{T.1}.

\end{document}